\documentclass[12pt,a4paper]{amsart}

\usepackage{amsmath,amssymb}
\usepackage[margin=2.5cm]{geometry}
\usepackage{hyperref}

\newtheorem{theorem}{Theorem}[section]
\newtheorem{lemma}[theorem]{Lemma}
\newtheorem{proposition}[theorem]{Proposition}
\newtheorem{corollary}[theorem]{Corollary}
\newtheorem{example}[theorem]{Example}

\theoremstyle{definition}
\newtheorem{definition}[theorem]{Definition}
\theoremstyle{remark}
\newtheorem*{remark}{Remark}

\title{On Numerical Semigroups with Fixed Quotient}

\author{I. Ojeda}
\address{I. Ojeda,  Departamento de Matem\'{a}ticas, Universidad de Extremadura, 06006 Badajoz, Spain. ORCID: \href{https://orcid.org/0000-0003-3173-5934}{0000-0003-3173-5934}}
\email{ojedamc@unex.es}

\author{J.C. Rosales}
\address{J.C. Rosales, Departamento de \'Algebra, Universidad de Granada, 18071 Granada, Spain. ORCID: \href{https://orcid.org/0000-0003-3353-4335}{0000-0003-3353-4335}}
\email{jrosales@ugr.es}

\keywords{numerical semigroup, fixed quotient, multiple, Ap\'ery set, pseudo-Frobenius number.}

\subjclass[2020]{Primary 20M14; Secondary 11D07, 20M25.}

\thanks{The first author was partially supported by project PID2022-138906NB-C21 (MCIN/AEI/10.13039/501100011033, NextGenerationEU/PRTR) and grant GR24068 (Junta de Extremadura, ERDF).}

\begin{document}

\begin{abstract}
Let $\Delta$ be a numerical semigroup and let $d\ge 2$ be an integer. We study the fiber of the quotient map \(S\mapsto S/d\) over $\Delta$. We describe its elements as semigroups of the form $\langle X\rangle+d\Delta$, for suitable finite sets $X\subseteq\Delta$, and then analyze explicit and computable regions of this fiber. In particular, we introduce a family $\Delta_d(a)$ of multiples with prescribed quotient and compute its generators, classical invariants, Ap\'ery sets, and presentations. We also show that this construction preserves Wilf's inequality and controls the depth. Finally, we introduce the $\mathcal{M}_d(\Delta)$-rank, determine its maximal value in the fiber, relate it to the ordinary embedding dimension, characterize the rank-one elements, and give closed formulas for their Frobenius-type invariants and pseudo-Frobenius numbers.
\end{abstract}

\maketitle


\section{Introduction}

Let $\mathbb{N}=\{0,1,2,\dots\}$ and let $\mathbb{Z}$ be the set of integers. A \emph{numerical semigroup} is a submonoid $S\subseteq(\mathbb{N},+)$ with finite complement $\mathbb{N}\setminus S$. We write $m(S)=\min(S\setminus\{0\})$ for the multiplicity, $\operatorname{F}(S)=\max(\mathbb{Z}\setminus S)$ for the Frobenius number, and $\operatorname{g}(S)=|\mathbb{N}\setminus S|$ for the genus. Every numerical semigroup has a unique minimal system of generators, denoted by $\operatorname{msg}(S)$; its cardinality is the embedding dimension $\operatorname{e}(S)$; see \cite[Theorem~2.7]{book}. The classical Frobenius problem asks for expressions of $\operatorname{F}(S)$ and $\operatorname{g}(S)$ in terms of $\operatorname{msg}(S)$; it is completely solved in embedding dimension two and remains widely open in general \cite{sylvester,alfonsin}.

If $A\subseteq\mathbb{N}$ and $d\ge 1$, we write $A/d=\{x\in\mathbb{N}:dx\in A\}$. For a numerical semigroup $S$, the quotient $S/d$ is again a numerical semigroup \cite{proportionally}. We say that $T$ is a \emph{multiple} of $S$ if $T/d=S$ for some $d\ge 1$. Multiples and quotients appear naturally in several constructions and have been studied from different viewpoints, including arithmetic extensions and related closure processes; see, for instance, \cite{cim,australian,ojeda3}.

In this paper we fix a numerical semigroup \(\Delta\neq\mathbb{N}\) and an integer
\(d\ge2\), and we study the fiber
\[
\mathcal{M}_d(\Delta)=\{S:\ S \text{ is a numerical semigroup and } S/d=\Delta\}.
\]
Equivalently, if \(\pi_d\) denotes the map on the set of numerical semigroups defined by \(\pi_d(S)=S/d\), then \(\mathcal{M}_d(\Delta)=\pi_d^{-1}(\Delta)\). In concrete terms, \(S\in\mathcal{M}_d(\Delta)\) if and only if \(S\cap d\mathbb{N}=d\Delta\). Thus the elements of \(S\) divisible by \(d\) are prescribed, and the remaining residue classes modulo \(d\) are to be determined. Our guiding question is structural and effective at the same time: \emph{how explicitly can one describe the whole family $\mathcal{M}_d(\Delta)$, and how do classical invariants vary along it?} Fixing $\Delta$ imposes strong congruence constraints modulo $d$, and these constraints turn out to be rigid enough to yield an explicit model for the fiber.

Our approach has two complementary strands. First, we describe the whole fiber $\mathcal{M}_d(\Delta)$ in structural terms. We prove that its elements are precisely the numerical semigroups of the form $\langle X\rangle+d\Delta$, where $X\subseteq\Delta$ is finite and satisfies explicit compatibility conditions (Theorem~\ref{th:Md-fiber}). This is done by introducing $\mathcal{M}_d(\Delta)$-monoids and exploiting their stability under finite intersections.

Second, we study tractable subfamilies and a rank filtration inside this fiber. The structural description shows that an arbitrary element of $\mathcal{M}_d(\Delta)$ is obtained from $d\Delta$ by adjoining finitely many elements outside $d\Delta$; for arbitrary such choices one should not expect simple closed formulas for the usual invariants. We therefore begin with a particularly uniform construction: given an integer $d\ge 2$ and an element $a\in \Delta \setminus\{0\}$ with $a+1\in \Delta$, we construct a multiple $\Delta_d(a) \in \mathcal{M}_d(\Delta)$ (Theorem~\ref{th:3.1}). This construction adds one generator in each nonzero residue class modulo $d$, and it is explicit enough to yield closed formulas for the embedding dimension, multiplicity, Frobenius number, genus, and related quantities (Theorem~\ref{th:3.3}), as well as descriptions of Ap\'ery sets and presentations. As an application of these formulas, we show that Wilf's inequality is preserved under the family $\Delta_d(a)$ (Corollary~\ref{cor:wilf-preserved}), providing a systematic way to generate further examples from known cases.

In connection with the recent terminology of \cite{australian}, recall that a numerical semigroup is a $k$-quotient if it can be written
as $T/d$ with $\operatorname{e}(T)=k$, and the minimum such $k$ is the \emph{quotient rank}. Our results around Corollary~\ref{cor:embdim-both} show that the set of embedding dimensions occurring among multiples of a fixed semigroup is upward closed above the quotient rank.
Moreover, $k$-quotients admit an equivalent polyhedral description in terms of projections of cones with $k$ extreme rays \cite{cones}.

The same viewpoint also suggests an intrinsic way to measure how many generators are needed outside $d\Delta$ within a fixed fiber. For \(M/d=\Delta\), the \(\mathcal{M}_d(\Delta)\)-rank of \(M\) is defined as the number of generators needed outside \(d\Delta\) in the unique minimal \(\mathcal{M}_d(\Delta)\)-system of generators of \(M\). We show that this rank filtration has a finite top layer: its maximal value on \(\mathcal{M}_d(\Delta)\) is \((d-1)m(\Delta)\). We also relate the relative rank with the ordinary embedding dimension by describing exactly which generators of \(d\Delta\) remain minimal after adjoining the relative generators. Thus the construction \(\Delta_d(a)\) gives a uniform family of \(\mathcal{M}_d(\Delta)\)-rank \(d-1\); in general this is not maximal, but it provides one generator in each nonzero residue class modulo \(d\). In particular, for \(d=2\) it lies in the rank-one layer. The last section analyzes this first genuinely numerical layer of the fiber in full generality. We characterize the rank-one elements (Theorem~\ref{th:rankone-char}) and, in the coprime case $\gcd(x,d)=1$, derive closed formulas for Frobenius-type invariants and pseudo-Frobenius numbers for semigroups of the form $\langle x\rangle+d\Delta$ (Theorems~\ref{th:Fg-rankone} and~\ref{th:PF-rankone}).

The paper is organized as follows. Section~\ref{sect2} establishes the structural description of $\mathcal{M}_d(\Delta)$ via $\mathcal{M}_d(\Delta)$-monoids and finite-intersection properties. Section~\ref{Sect3} develops the explicit construction $\Delta_d(a)$ and computes its invariants, Ap\'ery sets, and presentations. Section~\ref{Sect4} introduces $\mathcal{M}_d(\Delta)$-rank, analyzes rank-one elements, and derives Frobenius-type and pseudo-Frobenius data in that setting.

\section{\texorpdfstring{Structural description of $\mathcal{M}_d(\Delta)$}{Structural description of Md(Delta)}}\label{sect2}

Throughout the rest of the paper, unless explicitly stated otherwise,  $\Delta\neq\mathbb N$ and $d\ge 2$ are fixed as in the introduction.

This section contains the structural core of the paper. We show that the family $\mathcal{M}_d(\Delta)$ admits a concrete description in terms of associated monoids and, ultimately, semigroups of the form $\langle X \rangle+d\Delta$. This characterization is the key step in turning the fixed-quotient problem into an effective one.

\begin{definition}\label{def:2.1}
A submonoid $M$ of $(\mathbb{N},+)$ is called a \emph{$\mathcal{M}_d(\Delta)$-monoid} if $M/d=\Delta$.
\end{definition}

We begin with a basic observation.

\begin{lemma}
If $M$ is a submonoid of $(\mathbb{N},+)$ and $d\ge 1$, then $M/d$ is also a submonoid of $(\mathbb{N},+)$.
\end{lemma}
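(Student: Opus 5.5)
The plan is to verify the two defining properties of a submonoid straight from the definition $M/d=\{x\in\mathbb{N}: dx\in M\}$. Nothing beyond the definition and the submonoid axioms for $M$ is needed. The only structural fact used is that multiplication by $d$ is additive: $d(x+y)=dx+dy$.

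First, I show that $0\in M/d$. Since $M$ is a submonoid of $(\mathbb{N},+)$, we have $0\in M$. Because $d\cdot 0=0$, it follows that $0\in M/d$.

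Second, I show that $M/d$ is closed under addition. Take $x,y\in M/d$, so that $x,y\in\mathbb{N}$ with $dx\in M$ and $dy\in M$. Closure of $M$ under addition gives $dx+dy\in M$. Since $dx+dy=d(x+y)$ and $x+y\in\mathbb{N}$, we get $x+y\in M/d$. Finally, $M/d\subseteq\mathbb{N}$ holds by definition, so $M/d$ is a submonoid of $(\mathbb{N},+)$.

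There is no real obstacle here; the argument is entirely routine. The hypothesis $d\ge1$ is needed only so that $M/d$ is defined as stated. For $d\ge 1$ the map $x\mapsto dx$ is a monoid homomorphism $\mathbb{N}\to\mathbb{N}$, and $M/d$ is exactly the preimage of the submonoid $M$ under it. One could phrase the whole proof as the general fact that the preimage of a submonoid under a monoid homomorphism is a submonoid.
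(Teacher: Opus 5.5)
Your proof is correct and follows essentially the same route as the paper's: you show $0\in M/d$ because $d\cdot 0=0\in M$, and you get closure under addition from $d(x+y)=dx+dy\in M$. Your closing remark, that $M/d$ is the preimage of $M$ under the monoid homomorphism $x\mapsto dx$, is a valid reformulation of the same argument.
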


\begin{proof}
Since $0\in M$, we have $d\cdot 0=0\in M$, and therefore $0\in M/d$. Now let $x,y\in M/d$. Then
$dx\in M$ and $dy\in M$. As $M$ is closed under addition, $d(x+y)=dx+dy\in M$.
Hence $x+y\in M/d$. Therefore $M/d$ is a submonoid of $(\mathbb{N},+)$.
\end{proof}

\begin{proposition}
A submonoid $M$ of $(\mathbb{N},+)$ is a $\mathcal{M}_d(\Delta)$-monoid if and only if $d\Delta\subseteq M$ and
$M\cap d(\mathbb{N}\setminus \Delta)=\varnothing$.
\end{proposition}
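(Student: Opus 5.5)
The plan is to prove the two implications directly from the definition $M/d=\{x\in\mathbb{N}: dx\in M\}$, splitting $\mathbb{N}$ into $\Delta$ and its complement $\mathbb{N}\setminus\Delta$. The key identity to record first is
\[
M\cap d\mathbb{N}=d\,(M/d),
\]
which holds for any subset $M\subseteq\mathbb{N}$: an element of $M$ divisible by $d$ has the form $dx$ with $x\in M/d$, and conversely. Since $d\mathbb{N}$ is the disjoint union of $d\Delta$ and $d(\mathbb{N}\setminus\Delta)$ (multiplication by $d\ge 1$ is injective), the condition $M/d=\Delta$ can be compared with the two stated conditions inclusion by inclusion.

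\emph{($\Rightarrow$)} Suppose $M/d=\Delta$.
\begin{itemize}
\item If $x\in\Delta$, then $x\in M/d$, so $dx\in M$. This gives $d\Delta\subseteq M$.
\item If some $dy\in M$ had $y\in\mathbb{N}\setminus\Delta$, then $y\in M/d=\Delta$, which is a contradiction. Hence $M\cap d(\mathbb{N}\setminus\Delta)=\varnothing$.
\end{itemize}

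\emph{($\Leftarrow$)} Suppose both conditions hold.
\begin{itemize}
\item $d\Delta\subseteq M$ gives $\Delta\subseteq M/d$ immediately.
\item For the reverse inclusion, take $x\in M/d$, so $dx\in M$. If $x\notin\Delta$, then $dx\in M\cap d(\mathbb{N}\setminus\Delta)$, contradicting the second condition. Thus $x\in\Delta$, and $M/d=\Delta$, so $M$ is a $\mathcal{M}_d(\Delta)$-monoid by Definition~\ref{def:2.1}.
\end{itemize}

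There is no real obstacle here. The only point needing care is that $x\in\mathbb{N}$ throughout, so that $\mathbb{N}\setminus\Delta$ is the correct complement in which the second condition is applied. The monoid hypothesis on $M$ is not actually used beyond ensuring that $M/d$ is a submonoid, as in the preceding lemma.
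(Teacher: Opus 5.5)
Your proof is correct and follows the paper's argument essentially line by line: in both directions you unwind the definition $M/d=\{x\in\mathbb{N}: dx\in M\}$, and you use the second condition exactly to rule out elements of $M/d$ lying outside $\Delta$. The preliminary identity $M\cap d\mathbb{N}=d\,(M/d)$ is true but is never used, and you are right that the monoid structure of $M$ plays no role in the equivalence.
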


\begin{proof}
If $M/d=\Delta$, then $x\in \Delta$ implies $dx\in M$, so $d\Delta\subseteq M$. If $dn\in M$ for some
$n\notin \Delta$, then $n\in M/d=\Delta$, a contradiction. Conversely, assume $d\Delta\subseteq M$ and
$M\cap d(\mathbb{N}\setminus \Delta)=\varnothing$. For every $x\in \Delta$ we have $dx\in M$, hence $x\in M/d$.
If $x\in M/d$, then $dx\in M$, and by the second condition $x\notin \mathbb{N}\setminus \Delta$. Therefore $x\in \Delta$.
\end{proof}

\begin{lemma}
If $A$ and $B$ are submonoids of $(\mathbb{N},+)$, then $A+B=\{a+b : a\in A,\ b\in B\}$ is also a submonoid of $(\mathbb{N},+)$.
\end{lemma}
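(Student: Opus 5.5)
The plan is to verify the two submonoid axioms directly from the definition of $A+B$, exactly as in the earlier lemma on $M/d$. No earlier results are needed beyond the fact that $A$ and $B$ are submonoids of $(\mathbb{N},+)$.

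First, I check containment and the identity. Every element of $A+B$ has the form $a+b$ with $a,b\in\mathbb{N}$, so it is a natural number, and hence $A+B\subseteq\mathbb{N}$. Since $0\in A$ and $0\in B$, we get $0=0+0\in A+B$.

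Next, I check closure under addition. Take $x=a+b$ and $y=a'+b'$ in $A+B$, with $a,a'\in A$ and $b,b'\in B$. By commutativity and associativity of addition in $\mathbb{N}$,
\[
x+y=(a+a')+(b+b').
\]
Here $a+a'\in A$ and $b+b'\in B$, because $A$ and $B$ are closed under addition. So $x+y\in A+B$, and therefore $A+B$ is a submonoid of $(\mathbb{N},+)$.

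There is no real obstacle in this argument. The only point that deserves mention is the rearrangement of the sum, which relies on the commutativity of $(\mathbb{N},+)$; this is precisely why $A+B$ is closed in this setting.
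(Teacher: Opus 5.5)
Your proof is correct and is essentially the same as the paper's: $0=0+0\in A+B$, and closure follows from rewriting the sum as $(a+a')+(b+b')$. The only addition is your explicit check that $A+B\subseteq\mathbb{N}$, which the paper leaves implicit.
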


\begin{proof}
Because $0\in A$ and $0\in B$, we have $0=0+0\in A+B$. If $x,y\in A+B$, then there exist
$a_1,a_2\in A$ and $b_1,b_2\in B$ such that $x=a_1+b_1$ and $y=a_2+b_2$. Since $A$ and $B$ are submonoids,
$a_1+a_2\in A$ and $b_1+b_2\in B$. Therefore $x+y=(a_1+a_2)+(b_1+b_2)\in A+B$.
Thus $A+B$ is a submonoid of $(\mathbb{N},+)$.
\end{proof}

\begin{proposition}\label{prop:2.5}
The family of all $\mathcal{M}_d(\Delta)$-monoids equals
\[
\{A+d\Delta : A \text{ is a submonoid of } (\mathbb{N},+),\ A\cap d(\mathbb{N}\setminus \Delta)=\varnothing\}.
\]
\end{proposition}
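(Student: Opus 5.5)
We prove the two inclusions separately, using the criterion of the preceding proposition. That criterion says a submonoid $M$ of $(\mathbb{N},+)$ is a $\mathcal{M}_d(\Delta)$-monoid exactly when $d\Delta\subseteq M$ and $M\cap d(\mathbb{N}\setminus\Delta)=\varnothing$.

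\emph{Every $\mathcal{M}_d(\Delta)$-monoid has the stated form.} Let $M$ be a $\mathcal{M}_d(\Delta)$-monoid and take $A=M$. Then $A$ is a submonoid, and $A\cap d(\mathbb{N}\setminus\Delta)=\varnothing$ by the preceding proposition. Since $d\Delta\subseteq M$, $0\in d\Delta$ and $M$ is closed under addition, we get $M\subseteq M+d\Delta\subseteq M$. Hence $M=A+d\Delta$ lies in the family.

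\emph{Every member of the family is a $\mathcal{M}_d(\Delta)$-monoid.} Let $A$ be a submonoid with $A\cap d(\mathbb{N}\setminus\Delta)=\varnothing$, and put $M=A+d\Delta$.
\begin{itemize}
\item By the earlier lemma on sums, $M$ is a submonoid, since $d\Delta$ is one ($\Delta$ is a monoid).
\item Because $0\in A$, we have $d\Delta\subseteq M$.
\end{itemize}
It remains to show $M\cap d(\mathbb{N}\setminus\Delta)=\varnothing$, and this is the only real obstacle. Suppose $a+d\delta=dn$ with $a\in A$, $\delta\in\Delta$ and $n\notin\Delta$. Then $d\mid a$, say $a=dk$ with $k\in\mathbb{N}$, and $k\le n$.
\begin{itemize}
\item Since $dk=a\in A$ and $A$ avoids $d(\mathbb{N}\setminus\Delta)$, we must have $k\in\Delta$.
\item Now $n=k+\delta$ with $k,\delta\in\Delta$, so $n\in\Delta$, a contradiction.
\end{itemize}
So $M$ meets no element of $d(\mathbb{N}\setminus\Delta)$. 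By the preceding proposition $M/d=\Delta$, which finishes the proof.

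The key point is that divisibility of $a+d\delta$ by $d$ forces $a$ to be a multiple of $d$. The hypothesis on $A$ then puts $a/d$ in $\Delta$, and closure of $\Delta$ under addition completes the argument.
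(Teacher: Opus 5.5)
Your proof is correct and follows the paper's argument essentially step for step. Both take $A=M$ for one inclusion and, for the other, write $dn=a+d\delta$ to get $a=d(n-\delta)$. The only difference is phrasing: you show $k=n-\delta\in\Delta$ and use closure of $\Delta$ to conclude $n\in\Delta$, while the paper states the contrapositive, that $n\notin\Delta$ forces $n-\delta\notin\Delta$.
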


\begin{proof}
If $M$ is a $\mathcal{M}_d(\Delta)$-monoid, then $M=M+d\Delta$ because $d\Delta\subseteq M$. Setting
$A=M$, the previous proposition gives the desired inclusion. Conversely, if $A$ is a submonoid with
$A\cap d(\mathbb{N}\setminus \Delta)=\varnothing$, then $A+d\Delta$ is a submonoid by the previous lemma,
contains $d\Delta$. Moreover it still avoids $d(\mathbb{N}\setminus \Delta)$: indeed, if $dn\in A+d\Delta$ with $n\notin\Delta$, then
$dn=a+d\delta$ with $a\in A$ and $\delta\in\Delta$, so $a=d(n-\delta)$. Since $a\in A\subseteq\mathbb{N}$, we must have $n-\delta\in\mathbb{N}$. Since $\delta\in\Delta$ and $\Delta$ is a submonoid, $n\notin\Delta$ forces $n-\delta\notin\Delta$, hence $a\in A\cap d(\mathbb{N}\setminus\Delta)$, a contradiction. Therefore
$(A+d\Delta)\cap d(\mathbb{N}\setminus\Delta)=\varnothing$, and $A+d\Delta$ is an $\mathcal{M}_d(\Delta)$-monoid.
\end{proof}

\begin{proposition}\label{prop:2.6}
The family of $\mathcal{M}_d(\Delta)$-monoids is closed under finite intersections.
\end{proposition}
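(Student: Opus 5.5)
It suffices to treat two monoids, since the general finite case then follows by induction on the number of monoids. (Whether the empty intersection, read as $\mathbb{N}$, is meant to be included is immaterial here: $\mathbb{N}/d=\mathbb{N}\neq\Delta$, so only nonempty finite families are in question.) So let $M_1$ and $M_2$ be $\mathcal{M}_d(\Delta)$-monoids and set $M=M_1\cap M_2$.

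First I check that $M$ is a submonoid of $(\mathbb{N},+)$. We have $0\in M_1\cap M_2$. If $x,y\in M$, then $x+y$ lies in both $M_1$ and $M_2$, since each is closed under addition, so $x+y\in M$.

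Next I verify $M/d=\Delta$ using the characterization proved earlier in this section, namely that a submonoid $M$ is an $\mathcal{M}_d(\Delta)$-monoid if and only if $d\Delta\subseteq M$ and $M\cap d(\mathbb{N}\setminus\Delta)=\varnothing$. The first condition holds because $d\Delta\subseteq M_1$ and $d\Delta\subseteq M_2$ give $d\Delta\subseteq M_1\cap M_2$. The second holds because $M\cap d(\mathbb{N}\setminus\Delta)\subseteq M_1\cap d(\mathbb{N}\setminus\Delta)=\varnothing$.

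Alternatively, one can argue directly from the identity $(M_1\cap M_2)/d=M_1/d\cap M_2/d$. Indeed, $dx\in M_1\cap M_2$ holds exactly when $x\in M_1/d$ and $x\in M_2/d$, and then $M_1/d\cap M_2/d=\Delta\cap\Delta=\Delta$.

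There is no genuine obstacle in this argument. The only point to watch is that the induction is applied to nonempty families, so that the base case is a single $\mathcal{M}_d(\Delta)$-monoid and each step intersects two of them.
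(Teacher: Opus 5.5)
Your proof is correct and essentially the paper's: the paper checks that the intersection is a submonoid, uses exactly your alternative identity $(M\cap N)/d=(M/d)\cap(N/d)=\Delta$, and finishes by induction. Your primary route through the characterization $d\Delta\subseteq M$ and $M\cap d(\mathbb{N}\setminus\Delta)=\varnothing$ is an equally immediate variant of the same argument.
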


\begin{proof}
Finite intersections of submonoids of $(\mathbb{N},+)$ are submonoids. Let $M$ and $N$ be $\mathcal{M}_d(\Delta)$-monoids. For $x\in\mathbb{N}$,
\begin{align*}
 x\in (M\cap N)/d & \iff dx\in M\cap N  \iff (dx\in M \text{ and } dx\in N) \\ & \iff (x\in M/d \text{ and } x\in N/d),  
\end{align*}
so $(M\cap N)/d=(M/d)\cap(N/d)=\Delta$. Thus $M\cap N$ is an $\mathcal{M}_d(\Delta)$-monoid. The general finite case follows by induction.
\end{proof}

\begin{lemma}
If $M$ is a $\mathcal{M}_d(\Delta)$-monoid, then $M\subseteq \Delta$.
\end{lemma}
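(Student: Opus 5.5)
Take an arbitrary element $x\in M$; the goal is to show $x\in\Delta$. The idea is to use $d$ copies of $x$ to land in $d\mathbb{N}$, where membership in $M$ is controlled by $\Delta$.

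First, since $M$ is a submonoid of $(\mathbb{N},+)$ and $x\in M$, the sum $dx=x+\cdots+x$ (with $d$ summands) also lies in $M$. If $x=0$ there is nothing to prove, because $0\in\Delta$; in any case the argument works uniformly.

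Second, by definition of an $\mathcal{M}_d(\Delta)$-monoid we have $M/d=\Delta$. Since $dx\in M$, this means $x\in M/d$, and therefore $x\in\Delta$. Equivalently, by the proposition characterizing $\mathcal{M}_d(\Delta)$-monoids, $M\cap d(\mathbb{N}\setminus\Delta)=\varnothing$. As $dx\in M$, it cannot be that $x\in\mathbb{N}\setminus\Delta$, so again $x\in\Delta$.

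Since $x\in M$ was arbitrary, we conclude $M\subseteq\Delta$. There is no real obstacle here. The only point to notice is that closure of $M$ under addition gives $dx\in M$, and that, together with the defining condition $M/d=\Delta$, yields the inclusion immediately.
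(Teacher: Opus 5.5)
Your proof is correct and matches the paper's argument exactly: closure of $M$ under addition gives $dx\in M$, so $x\in M/d=\Delta$. Your alternative phrasing via the condition $M\cap d(\mathbb{N}\setminus\Delta)=\varnothing$ is just a restatement of the same step.
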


\begin{proof}
If $x\in M$, then $dx\in M$ because $M$ is a submonoid. Hence $x\in M/d=\Delta$.
\end{proof}

\begin{theorem}\label{th:2.8}
The set of all $\mathcal{M}_d(\Delta)$-monoids is
\[
\{\langle X \rangle + d\Delta : X\subseteq \Delta,\ X \text{ finite},\ X\cap d\Delta=\varnothing,
\ \langle X \rangle\cap d(\mathbb{N}\setminus \Delta)=\varnothing\}.
\]
\end{theorem}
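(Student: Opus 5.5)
We prove the two inclusions separately. The inclusion ``$\supseteq$'' is immediate from Proposition~\ref{prop:2.5}. If $X$ is finite and satisfies $\langle X\rangle\cap d(\mathbb{N}\setminus\Delta)=\varnothing$, then $A=\langle X\rangle$ is a submonoid of $(\mathbb{N},+)$ with $A\cap d(\mathbb{N}\setminus\Delta)=\varnothing$. Hence $\langle X\rangle+d\Delta$ is an $\mathcal{M}_d(\Delta)$-monoid. The conditions $X\subseteq\Delta$ and $X\cap d\Delta=\varnothing$ play no role in this direction.

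For ``$\subseteq$'', let $M$ be an $\mathcal{M}_d(\Delta)$-monoid. We have $d\Delta\subseteq M$ by the characterization proposition, and $M\subseteq\Delta$ by the preceding lemma. The key input is that every submonoid of $(\mathbb{N},+)$ is finitely generated (\cite{book}). So $M=\langle Y\rangle$ for some finite $Y\subseteq M$, and we put $X=Y\setminus d\Delta$. We then check the conditions on $X$ in turn.
\begin{itemize}
\item $X$ is finite, and $X\subseteq M\subseteq\Delta$.
\item $X\cap d\Delta=\varnothing$ by construction.
\item $\langle X\rangle\subseteq M$, and $M\cap d(\mathbb{N}\setminus\Delta)=\varnothing$, so $\langle X\rangle\cap d(\mathbb{N}\setminus\Delta)=\varnothing$.
\end{itemize}

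It remains to show $M=\langle X\rangle+d\Delta$. The inclusion $\supseteq$ holds because $X\subseteq M$, $d\Delta\subseteq M$, and $M$ is closed under addition. For $\subseteq$, take $m\in M$ and write it as $m=\sum_{y\in Y}\lambda_y y$ with $\lambda_y\in\mathbb{N}$. Split the sum according to whether $y\in X$ or $y\in Y\cap d\Delta$. The second part is an $\mathbb{N}$-combination of elements of $d\Delta$, which is a monoid, so it lies in $d\Delta$. The first part lies in $\langle X\rangle$. Hence $m\in\langle X\rangle+d\Delta$.

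The only step needing care is finite generation of $M$. Note that $M$ need not be a numerical semigroup: it could have a gcd greater than $1$, although here $d\Delta\subseteq M$ forces $\gcd(M)$ to divide $\gcd(d\Delta)=d$. The claim is still standard. Let $g=\gcd(M)$; then $M/g$ is a numerical semigroup, and a numerical semigroup is generated by its Ap\'ery set with respect to any nonzero element, which is finite. If one prefers to avoid citing this, an alternative is to take $X$ to be the minimal generators of $M$ not lying in $d\Delta$. Every minimal generator $y$ satisfies $y<m(M)+m'$, where $m'$ bounds the Ap\'ery set, which again gives finiteness. Everything else in the argument is bookkeeping.
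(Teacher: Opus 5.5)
Your proof is correct and follows the paper's: the paper takes $X=\operatorname{msg}(M)\setminus d\Delta$ (your fallback choice) and gets the other inclusion from Proposition~\ref{prop:2.5}, as you do. Your variant with an arbitrary finite generating set $Y$ works equally well, since only existence of $X$ is needed; one small slip in your aside is that a numerical semigroup $S$ is generated by $\operatorname{Ap}(S,n)\cup\{n\}$, not by $\operatorname{Ap}(S,n)$ alone, but finiteness is unaffected.
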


\begin{proof}
Let $M$ be a $\mathcal{M}_d(\Delta)$-monoid and define
$X=\{x\in \operatorname{msg}(M): x\notin d\Delta\}$. Then $M=\langle X \rangle+d\Delta$ and $X\subseteq \Delta$ by the previous lemma.
Minimality of $X$ shows $X\cap d\Delta=\varnothing$, while the quotient characterization yields
$\langle X \rangle\cap d(\mathbb{N}\setminus \Delta)=\varnothing$. The converse follows directly from the previous proposition.
\end{proof}

\begin{corollary}\label{cor:2.9}
If $X\subseteq \mathbb{N}$ satisfies $\langle X \rangle\cap d(\mathbb{N}\setminus \Delta)=\varnothing$, then $\langle X \rangle+d\Delta$ is the smallest $\mathcal{M}_d(\Delta)$-monoid containing $X$.
\end{corollary}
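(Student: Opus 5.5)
We need two things: that $\langle X\rangle+d\Delta$ is an $\mathcal{M}_d(\Delta)$-monoid containing $X$, and that it is contained in every $\mathcal{M}_d(\Delta)$-monoid containing $X$.

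For the first part, set $A=\langle X\rangle$. This is a submonoid of $(\mathbb{N},+)$, and the hypothesis says exactly that $A\cap d(\mathbb{N}\setminus\Delta)=\varnothing$. Proposition~\ref{prop:2.5} then shows that $A+d\Delta$ is an $\mathcal{M}_d(\Delta)$-monoid. It contains $X$, since each $x\in X$ can be written as $x+d\cdot 0$ with $0\in\Delta$.

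For minimality, let $M$ be any $\mathcal{M}_d(\Delta)$-monoid with $X\subseteq M$. Since $M$ is a submonoid, $\langle X\rangle\subseteq M$. By the characterization of $\mathcal{M}_d(\Delta)$-monoids, $d\Delta\subseteq M$. Because $M$ is closed under addition, it follows that $\langle X\rangle+d\Delta\subseteq M$. So $\langle X\rangle+d\Delta$ is the least element, under inclusion, of the family of $\mathcal{M}_d(\Delta)$-monoids containing $X$.

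Nothing here is a genuine obstacle. The one point to note is that $X$ need not be finite or contained in $\Delta$, but neither the proof of Proposition~\ref{prop:2.5} nor the minimality step uses finiteness. Moreover, $X\subseteq\Delta$ follows automatically, because the monoid obtained is contained in $\Delta$ by the earlier lemma. The minimality argument also does not need Proposition~\ref{prop:2.6}: intersecting all $\mathcal{M}_d(\Delta)$-monoids containing $X$ would be an alternative route, but it is unnecessary.
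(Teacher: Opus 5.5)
Your proposal is correct and follows essentially the same route as the paper: you apply Proposition~\ref{prop:2.5} to the submonoid $\langle X\rangle$ to get an $\mathcal{M}_d(\Delta)$-monoid, then prove minimality by noting that any $\mathcal{M}_d(\Delta)$-monoid containing $X$ contains both $\langle X\rangle$ and $d\Delta$. Your extra remarks, that finiteness of $X$ is never used and that $X\subseteq\Delta$ follows automatically, are accurate but not needed for the argument.
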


\begin{proof}
Since $\langle X \rangle$ is a submonoid of $(\mathbb{N},+)$ and
$\langle X \rangle\cap d(\mathbb{N}\setminus \Delta)=\varnothing$, Proposition~\ref{prop:2.5}
implies that $\langle X \rangle+d\Delta$ is an $\mathcal{M}_d(\Delta)$-monoid. If $M$ is any
$\mathcal{M}_d(\Delta)$-monoid containing $X$, then $\langle X \rangle\subseteq M$ and also
$d\Delta\subseteq M$, so $\langle X \rangle+d\Delta\subseteq M$. Hence $\langle X \rangle+d\Delta$ is
the smallest $\mathcal{M}_d(\Delta)$-monoid containing $X$.
\end{proof}

\begin{lemma}\label{lem:2.10}
Let $X\subseteq \mathbb{N}$. Then $\langle X \rangle+d\Delta$ is a numerical semigroup if and only if
$\operatorname{gcd}(X\cup d\Delta)=1$.
\end{lemma}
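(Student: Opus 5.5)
We reduce the statement to the classical fact that a submonoid $M$ of $(\mathbb{N},+)$ is a numerical semigroup, that is, has finite complement in $\mathbb{N}$, if and only if the greatest common divisor of its elements is $1$; see \cite[Lemma~2.1]{book}. Put $M=\langle X\rangle+d\Delta$. By the lemma on sums of submonoids, $M$ is a submonoid of $(\mathbb{N},+)$. So it suffices to show that $\gcd(M)=\gcd(X\cup d\Delta)$, with the convention that the gcd of $\{0\}$ (or of the empty set) is $0$.

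First we check $\gcd(M)=\gcd(X\cup d\Delta)$. Since $X\subseteq M$ and $d\Delta\subseteq M$ (take $0\in d\Delta$, respectively $0\in\langle X\rangle$), we have $X\cup d\Delta\subseteq M$. Hence $\gcd(M)$ divides every element of $X\cup d\Delta$, and so divides $\gcd(X\cup d\Delta)$. Conversely, every element of $M$ has the form $\sum_i \lambda_i x_i + d\delta$ with $\lambda_i\in\mathbb{N}$, $x_i\in X$ and $\delta\in\Delta$. This is an integer combination of elements of $X\cup d\Delta$, so $\gcd(X\cup d\Delta)$ divides it. Therefore $\gcd(X\cup d\Delta)$ divides $\gcd(M)$, and the two gcds coincide.

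Next we prove the two implications. If $M$ is a numerical semigroup, then $\mathbb{N}\setminus M$ is finite, so $M$ contains two consecutive integers $n$ and $n+1$. Then $\gcd(M)=1$, and hence $\gcd(X\cup d\Delta)=1$. Conversely, suppose $\gcd(M)=1$. Pick finitely many elements $m_1,\dots,m_k\in M$ with $\gcd(m_1,\dots,m_k)=1$; this is possible because the gcd of an infinite set is already attained on a finite subset. Bézout gives $1=\sum z_i m_i$ with $z_i\in\mathbb{Z}$. Splitting positive and negative coefficients, we get $1=a-b$ with $a,b\in M$, so $M$ contains the consecutive integers $b$ and $b+1$. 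Every $n\ge (b-1)b$ can be written as $n=qb+r$ with $0\le r<b$ and $q\ge b-1\ge r$. Then $n=(q-r)b+r(b+1)\in M$, so $\mathbb{N}\setminus M$ is finite.

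There is no real obstacle here. The only care needed is with the degenerate cases: $X$ empty, or $b=0$, which would mean $1\in M$ and so $M=\mathbb{N}$. Both are covered by the same argument. Alternatively, we can cite \cite[Lemma~2.1]{book} directly, once the equality of gcds is in place.
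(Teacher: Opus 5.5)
Your proposal is correct and follows essentially the paper's route. The paper observes that $\langle X\rangle+d\Delta=\langle X\cup d\Delta\rangle$ and cites \cite[Lemma~2.1]{book}; your check that $\gcd(M)=\gcd(X\cup d\Delta)$, together with your self-contained proof of that lemma, is just an expanded version of the same argument.
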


\begin{proof}
Since $d\Delta=\langle d\Delta \rangle$, we have $\langle X \rangle+d\Delta=\langle X\cup d\Delta \rangle$. The claim is therefore an immediate consequence of Lemma~2.1 of~\cite{book}.
\end{proof}

Combining the previous theorem with this criterion we obtain the following characterization.

\begin{theorem}\label{th:Md-fiber}
The family $\mathcal{M}_d(\Delta)$ is exactly the set of numerical semigroups of the form $\langle X \rangle+d\Delta$, where $X\subseteq \Delta$ is finite, $X\cap d\Delta=\varnothing$,
$\langle X \rangle\cap d(\mathbb{N}\setminus \Delta)=\varnothing$, and $\operatorname{gcd}(X\cup d\Delta)=1$.
\end{theorem}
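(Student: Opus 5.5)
The proof is a double inclusion that combines Theorem~\ref{th:2.8} with Lemma~\ref{lem:2.10}. The starting observation is that $\mathcal{M}_d(\Delta)$ is exactly the set of $\mathcal{M}_d(\Delta)$-monoids that happen to be numerical semigroups. Indeed, by definition $S\in\mathcal{M}_d(\Delta)$ means $S$ is a numerical semigroup with $S/d=\Delta$. A numerical semigroup is in particular a submonoid of $(\mathbb{N},+)$, so such an $S$ is an $\mathcal{M}_d(\Delta)$-monoid in the sense of Definition~\ref{def:2.1}. Conversely, any $\mathcal{M}_d(\Delta)$-monoid that is a numerical semigroup lies in the fiber.

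For the inclusion ``$\subseteq$'', take $S\in\mathcal{M}_d(\Delta)$. Since $S$ is an $\mathcal{M}_d(\Delta)$-monoid, Theorem~\ref{th:2.8} gives a finite set $X$ with the required properties. Concretely, $X$ is the set of minimal generators of $S$ not lying in $d\Delta$. It satisfies $X\subseteq\Delta$, $X\cap d\Delta=\varnothing$, $\langle X\rangle\cap d(\mathbb{N}\setminus\Delta)=\varnothing$, and $S=\langle X\rangle+d\Delta$. Since $S$ is a numerical semigroup, Lemma~\ref{lem:2.10} applied to this $X$ gives $\gcd(X\cup d\Delta)=1$.

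For the inclusion ``$\supseteq$'', take $X$ satisfying the four conditions. The first three conditions and Theorem~\ref{th:2.8} (or directly Proposition~\ref{prop:2.5} applied to $A=\langle X\rangle$) show that $\langle X\rangle+d\Delta$ is an $\mathcal{M}_d(\Delta)$-monoid, so $(\langle X\rangle+d\Delta)/d=\Delta$. The gcd condition and Lemma~\ref{lem:2.10} show it is a numerical semigroup. Hence it belongs to $\mathcal{M}_d(\Delta)$.

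No step is a genuine obstacle, since everything reduces to earlier results. The only point to check is that in the ``$\subseteq$'' direction the $X$ produced by Theorem~\ref{th:2.8} is the one used in Lemma~\ref{lem:2.10}. This holds because that lemma applies to any $X\subseteq\mathbb{N}$ with $S=\langle X\rangle+d\Delta$.
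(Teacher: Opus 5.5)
Your proof is correct and follows the paper's argument essentially verbatim. The forward inclusion uses Theorem~\ref{th:2.8} to produce $X$ and Lemma~\ref{lem:2.10} for the gcd condition; the reverse inclusion combines the same two results, and your alternative via Proposition~\ref{prop:2.5} with $A=\langle X\rangle$ is exactly what the converse of Theorem~\ref{th:2.8} rests on.
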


\begin{proof}
Let $S\in \mathcal{M}_d(\Delta)$. Then $S$ is, in particular, a $\mathcal{M}_d(\Delta)$-monoid. By Theorem~\ref{th:2.8}, there exists a finite set $X\subseteq \Delta$ such that $S=\langle X \rangle+d\Delta$,
with $X\cap d\Delta=\varnothing$ and $\langle X \rangle\cap d(\mathbb{N}\setminus \Delta)=\varnothing$. Since $S$ is a numerical semigroup, Lemma~\ref{lem:2.10} yields $\operatorname{gcd}(X\cup d\Delta)=1$.

Conversely, let $X\subseteq \Delta$ be finite and assume that
$X\cap d\Delta=\varnothing$, $\langle X \rangle\cap d(\mathbb{N}\setminus \Delta)=\varnothing$, and
$\operatorname{gcd}(X\cup d\Delta)=1$. By Theorem~\ref{th:2.8}, the set $\langle X \rangle+d\Delta$ is a $\mathcal{M}_d(\Delta)$-monoid, that is, its quotient by $d$ is $\Delta$. By Lemma~\ref{lem:2.10}, the gcd condition implies that $\langle X \rangle+d\Delta$ is a numerical semigroup. Hence $\langle X \rangle+d\Delta\in \mathcal{M}_d(\Delta)$.
\end{proof}

Theorem~\ref{th:Md-fiber} gives a structural description of the entire fiber
$\mathcal{M}_d(\Delta)$. In general, however, the freedom in the choice of $X$ makes it
difficult to obtain closed formulas for invariants directly from this description. We next
focus on a particularly uniform subfamily, obtained by adjoining one element in each
nonzero residue class modulo $d$. This construction is still flexible enough to produce
many multiples, but rigid enough to allow explicit computations of generators, numerical
invariants, Ap\'ery sets, and presentations.

\section{Explicit constructions and invariants}\label{Sect3}

The structural description of Section~\ref{sect2} reduces the fixed-quotient problem to
understanding how finite sets of generators may be adjoined to $d\Delta$. In this section
we study a particularly uniform way of doing this: we construct a multiple $\Delta_d(a)$ by adjoining, after a fixed threshold, one generator
in each nonzero residue class modulo $d$. This family is explicit enough for the standard
invariants to be computed directly, and it will later serve as a motivating example for the
rank filtration introduced in Section~\ref{Sect4}.

\subsection{\texorpdfstring{The construction of $\Delta_d(a)$ and its invariants}{The construction of Deltad(a) and its invariants}}

Let us begin with a particularly useful family. Throughout this section, let
\(a\in\Delta\setminus\{0\}\) satisfy \(a+1\in\Delta\). We define
\[
\Delta_d(a)= d\Delta \,\cup\, \bigl(\{da+1,da+2,\dots,da+d-1\}+d\Delta\bigr).
\]
Equivalently, $\Delta_d(a)= d\Delta \cup \bigcup_{i=1}^{d-1} (da+i+d\Delta)$.

\begin{theorem}\label{th:3.1}
Let $a\in \Delta\setminus\{0\}$ with $a+1\in \Delta$. Then $\Delta_d(a)$ is a numerical semigroup and
\[
\frac{\Delta_d(a)}{d}=\Delta,\ \text{that is}\ \Delta_d(a) \in \mathcal{M}_d(\Delta).
\]
Moreover, if $\operatorname{msg}(\Delta)=\{n_1,\dots,n_e\}$, then
\begin{equation}\label{ecu:msg}
\Delta_d(a)=\bigl\langle dn_1,\dots,dn_e,\ da+1,\dots,da+d-1\bigr\rangle.
\end{equation}
In particular, $\Delta_d(a)$ is a multiple of $\Delta$.
\end{theorem}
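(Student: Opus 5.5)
Set $X=\{da+1,\dots,da+d-1\}$. The plan is to show that $\Delta_d(a)=\langle X\rangle+d\Delta$, and then to apply Theorem~\ref{th:Md-fiber}.

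\textbf{Step 1: the set equality.} The inclusion $\Delta_d(a)\subseteq\langle X\rangle+d\Delta$ is clear from the definition. For the reverse inclusion, it suffices to show that $\Delta_d(a)$ is closed under addition, since it contains $0$, $X$ and $d\Delta$. Sums inside $d\Delta$, or of an element of $d\Delta$ with one of $da+i+d\Delta$, stay in $\Delta_d(a)$ because $\Delta$ is a monoid. The real case is a sum $(da+i+d\delta)+(da+j+d\delta')$ with $1\le i,j\le d-1$.
\begin{itemize}
\item If $i+j<d$, this equals $da+(i+j)+d(a+\delta+\delta')$, and $a+\delta+\delta'\in\Delta$.
\item If $i+j=d$, it equals $d(2a+1+\delta+\delta')$. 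Here $2a+1=a+(a+1)\in\Delta$ by hypothesis.
\item If $i+j>d$, write $i+j=d+k$ with $1\le k\le d-1$. The sum is $da+k+d(a+1+\delta+\delta')$, and $a+1\in\Delta$.
\end{itemize}
So closure holds. This case analysis is the only place where $a+1\in\Delta$ enters, and it is the main point of the proof. The same computation also gives \eqref{ecu:msg}, because $d\Delta=\langle dn_1,\dots,dn_e\rangle$.

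\textbf{Step 2: the quotient.} The elements of $\Delta_d(a)$ divisible by $d$ are exactly those of $d\Delta$. Indeed, an element $da+i+d\delta$ with $1\le i\le d-1$ is not divisible by $d$. Hence $\Delta_d(a)\cap d\mathbb N=d\Delta$, that is, $\Delta_d(a)/d=\Delta$.

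Alternatively, one can check the hypotheses of Theorem~\ref{th:Md-fiber} directly.
\begin{itemize}
\item $X\subseteq\Delta$: since $\Delta\neq\mathbb N$ and $\Delta$ is a numerical semigroup, $a\ge 2$. Also $da\in\Delta$, and $da+i=(d-i)a+i(a+1)\in\Delta$.
\item $X\cap d\Delta=\varnothing$, by residues mod $d$.
\item $\langle X\rangle\cap d(\mathbb N\setminus\Delta)=\varnothing$, which follows from Step 1, since $\langle X\rangle\subseteq\Delta_d(a)$.
\end{itemize}

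\textbf{Step 3: the gcd.} $\gcd(X\cup d\Delta)=1$ because $da+1$ and $da+2$ are coprime when $d\ge3$. When $d=2$, the element $2a+1$ is odd and $d\Delta$ contains an even number $2m$ that is coprime to it; for instance, $\gcd(2a+1,2(2a+1)+2)=\gcd(2a+1,2)=1$ works if $2a+2\in d\Delta$, i.e.\ $a+1\in\Delta$. Thus $\Delta_d(a)$ is a numerical semigroup by Lemma~\ref{lem:2.10}, and hence $\Delta_d(a)\in\mathcal{M}_d(\Delta)$, a multiple of $\Delta$.
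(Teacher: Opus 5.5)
Your proposal is correct and takes essentially the same route as the paper. It uses the same three-case closure check ($i+j<d$, $i+j=d$, $i+j>d$) to obtain \eqref{ecu:msg}, the same residue-mod-$d$ observation for $\Delta_d(a)/d=\Delta$, and the gcd criterion of Lemma~\ref{lem:2.10}, where the paper cites \cite[Lemma~2.1]{book} directly. The only difference is in the gcd step: the paper just notes $\gcd(d\Delta)=d$ and $\gcd(d,da+1)=1$, which covers all $d\ge 2$ at once and avoids your split by $d$. Your $d=2$ branch is muddled (you compute with $4a+4$ but state the condition for $2a+2$), though still correct, since $2a+2=2(a+1)\in 2\Delta$ and $\gcd(2a+1,2a+2)=1$.
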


\begin{proof}
We first prove that $\Delta_d(a)$ is a submonoid of $(\mathbb{N},+)$. Clearly
$0\in d\Delta\subseteq \Delta_d(a)$. Let $x,y\in \Delta_d(a)$. If both lie in $d\Delta$, then
$x+y\in d\Delta$. If one lies in $d\Delta$ and the other in $da+i+d\Delta$, with
$i\in\{1,\dots,d-1\}$, then their sum lies again in $da+i+d\Delta$.

It remains to consider the case $x=da+i+ds$ and $y=da+j+dt$, with
$s,t\in \Delta$ and $i,j\in\{1,\dots,d-1\}$. If $i+j<d$, then
$x+y=da+(i+j)+d(a+s+t)\in da+(i+j)+d\Delta$. If $i+j=d$, then
$x+y=d(a+1+a+s+t)\in d\Delta$. Finally, if $i+j>d$, then
$x+y=da+(i+j-d)+d(a+1+s+t)\in da+(i+j-d)+d\Delta$. Hence $\Delta_d(a)$ is a submonoid.

Now let $\operatorname{msg}(\Delta)=\{n_1,\dots,n_e\}$. Since
$d\Delta=\langle dn_1,\dots,dn_e\rangle$, each set $da+i+d\Delta$ is contained in
$\langle dn_1,\dots,dn_e,da+1,\dots,da+d-1\rangle$. Hence $\Delta_d(a)$ is contained in
the right-hand side of \eqref{ecu:msg}. Conversely, all the displayed generators belong
to $\Delta_d(a)$, and since $\Delta_d(a)$ is a submonoid, the semigroup generated by them is
contained in $\Delta_d(a)$. Thus \eqref{ecu:msg} holds.

Finally, since $\gcd(n_1,\dots,n_e)=1$, we have $\gcd(dn_1,\dots,dn_e)=d$, and
$\gcd(d,da+1)=1$. Therefore the greatest common divisor of the generators in
\eqref{ecu:msg} is $1$, and $\Delta_d(a)$ is a numerical semigroup by
\cite[Lemma~2.1]{book}.

Moreover, every element of $da+i+d\Delta$ is congruent to $i\in\{1,\dots,d-1\}$ modulo
$d$, so the only multiples of $d$ in $\Delta_d(a)$ are those in $d\Delta$. Hence
$x\in \Delta_d(a)/d$ if and only if $dx\in d\Delta$, that is, if and only if $x\in \Delta$. Thus
$\Delta_d(a)/d=\Delta$.
\end{proof}

The first step is to understand the generators of $\Delta_d(a)$; once these are known, the standard invariants follow with little extra work.

\begin{proposition}\label{prop:2}
Let $\operatorname{msg}(\Delta)=\{n_1,\dots,n_e\}$. Then $\{dn_1,\dots,dn_e,da+1,da+2,\dots,da+d-1\}$ is the minimal system of generators of $\Delta_d(a)$. In particular, $\operatorname{e}(\Delta_d(a))=\operatorname{e}(\Delta)+d-1$,
where $\operatorname{e}(\Delta)$ denotes the embedding dimension of $\Delta$.
\end{proposition}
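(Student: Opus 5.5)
By Theorem~\ref{th:3.1} the displayed set generates $\Delta_d(a)$. Since a generating set of a numerical semigroup is its minimal system of generators exactly when no element lies in the submonoid generated by the others, it suffices to show that no listed generator is a sum of the other generators. I will also check that the generators are pairwise distinct, so that the count $e+d-1$ is correct. Distinctness is immediate: each $dn_k$ is $\equiv 0$ modulo $d$, while $da+i\equiv i\not\equiv 0$, and the $da+i$ lie in distinct residue classes.

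\emph{The generators $dn_k$.} Suppose $dn_k=\sum_{j\ne k}\lambda_j dn_j+\sum_{i=1}^{d-1}\mu_i(da+i)$ with $\lambda_j,\mu_i\in\mathbb{N}$. If all $\mu_i=0$, dividing by $d$ writes $n_k$ in terms of the other $n_j$, contradicting $n_k\in\operatorname{msg}(\Delta)$. So some $\mu_i>0$. The key observation is that any nonempty sum of elements from $\{da+1,\dots,da+d-1\}$ that is divisible by $d$ must contain at least two summands (each single one is nonzero mod $d$). Its total is then at least $2da+2$, and it equals $d\bigl(\sum\mu_i a+ (\sum \mu_i i)/d\bigr)$. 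I plan to rewrite this contribution as $d$ times an element of $\Delta$ using the identities from the proof of Theorem~\ref{th:3.1}. I expect this to be the main obstacle, since a comparison of sizes alone does not obviously suffice.

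To handle it, I would argue as follows. Set $q=(\sum\mu_i i)/d\in\mathbb{N}$, so $q\ge1$. Then $dn_k=d\bigl(\sum_{j\ne k}\lambda_j n_j + Ma + q\bigr)$ with $M=\sum\mu_i\ge 2$. Since $\sum\mu_i i\le M(d-1)$, we have $q\le M-1$, so the inner number is $\sum_{j\ne k}\lambda_j n_j + q(a+1) + (M-q)a$ with $M-q\ge1$. Both $a$ and $a+1$ are nonzero elements of $\Delta$, and $M\ge2$ gives at least two nonzero summands from $\Delta$. Hence $n_k$ is a sum of at least two nonzero elements of $\Delta$, which contradicts the minimality of $n_k$ in $\Delta$: minimal generators are exactly the elements of $\Delta\setminus\{0\}$ not expressible as a sum of two nonzero elements of $\Delta$.

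\emph{The generators $da+i$.} Suppose $da+i$ is a combination of the others. Some summand must be nonzero modulo $d$, so at least one $da+j$ with $j\neq i$ appears, contributing at least $da+1$. If two or more such terms appear, the sum is at least $2da+2>da+i$, which is impossible. So exactly one appears, say $da+j$, and the remainder is $i-j$, which must be a nonnegative element of $d\Delta$. Since $|i-j|<d$ and $i\ne j$, the difference $i-j$ is neither $0$ nor a nonzero multiple of $d$, a contradiction. Hence the system is minimal and $\operatorname{e}(\Delta_d(a))=\operatorname{e}(\Delta)+d-1$.
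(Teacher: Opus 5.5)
Your proof is correct and follows the paper's argument essentially step for step. It uses the same rewriting $Ma+q=(M-q)a+q(a+1)$ with $1\le q\le M-1$ to contradict the minimality of $n_k$, and the same size-plus-residue argument for the generators $da+i$; your explicit check that the listed generators are pairwise distinct, which justifies the count $\operatorname{e}(\Delta)+d-1$, is a small point the paper leaves implicit.
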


\begin{proof}
By \eqref{ecu:msg}, $\Delta_d(a)=\langle dn_1,\dots,dn_e,\,da+1,\dots,da+d-1\rangle$.
We now prove that this generating set is minimal.

First fix $i\in\{1,\dots,e\}$ and suppose, for contradiction, that
\[
dn_i=\sum_{j\neq i}\lambda_j\,dn_j+\sum_{r=1}^{d-1}\mu_r(da+r),
\qquad \lambda_j,\mu_r\in\mathbb{N}.
\]
If $\sum_{r=1}^{d-1}\mu_r=0$, then
\[
n_i=\sum_{j\neq i}\lambda_j n_j,
\]
contradicting $n_i\in\operatorname{msg}(\Delta)$. Hence we may assume that
$\sum_{r=1}^{d-1}\mu_r>0$.

Set $k=\sum_{r=1}^{d-1}\mu_r$ and $R=\sum_{r=1}^{d-1}\mu_r r$.
Reducing modulo $d$ yields $R\equiv 0\pmod d$, so $R=qd$ for some $q\ge 1$.
Note that necessarily $k\ge 2$, since otherwise $R=r$ with $1\le r\le d-1$ cannot
be divisible by $d$. Since $k\le R\le k(d-1)$, we get $1\le q\le k-1$.

Dividing the original equality by $d$ gives
\[
n_i=\sum_{j\neq i}\lambda_j n_j+ka+q.
\]
Moreover, $ka+q=(k-q)a+q(a+1)$, and since $a,a+1\in \Delta$ with
$1\le q\le k-1$, this expresses $ka+q$ as a sum of nonzero elements of $\Delta$.
Hence $n_i$ is a sum of nonzero elements of $\Delta$, contradicting
$n_i\in\operatorname{msg}(\Delta)$. Therefore each $dn_i$ is indispensable.

Next fix $i\in\{1,\dots,d-1\}$. Any element of $\langle dn_1,\dots,dn_e\rangle$
is $0$ modulo $d$, so any expression of $da+i$ must involve some generator $da+j$.
If it involves at least two such generators, counting multiplicity, then its value is at
least $2(da+1)>da+i$, impossible. Thus
\[
da+i=(da+j)+\sum_{r=1}^e\lambda_r dn_r
\]
for some $j\in\{1,\dots,d-1\}$ and $\lambda_r\in\mathbb{N}$. Reducing modulo $d$
gives $i\equiv j\pmod d$, hence $j=i$, and then
$\sum_{r=1}^e\lambda_rdn_r=0$, so all $\lambda_r=0$. Therefore $da+i$ is also
indispensable.

Consequently, $\{dn_1,\dots,dn_e,da+1,\dots,da+d-1\}$ is the minimal system of
generators of $\Delta_d(a)$, and $\operatorname{e}(\Delta_d(a))=\operatorname{e}(\Delta)+d-1$.
\end{proof}

This raises the natural question of which embedding dimensions occur among multiples of $\Delta$.
The next corollary shows that realizability is upward closed once one multiple is known.

\begin{corollary}\label{cor:embdim-both}
Let $\Delta$ be a numerical semigroup.
\begin{enumerate}
\item For every integer $k\ge \operatorname{e}(\Delta)$ there exist an integer $d\ge 1$ and a numerical
semigroup $S$ such that $S/d=\Delta$ and $\operatorname{e}(S)=k$. More precisely, one may take
$d=k-(\operatorname{e}(\Delta)-1)$.
\item Let
$
e_{\min}=\min\{\operatorname{e}(T): T \text{ is a multiple of } \Delta\}.
$
Then every integer $k\ge e_{\min}$ occurs as the embedding dimension of some multiple of
$\Delta$.
\end{enumerate}
\end{corollary}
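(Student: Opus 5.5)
The plan is to obtain both parts from Proposition~\ref{prop:2}. That proposition says that adjoining the construction $\Delta_d(a)$ raises the embedding dimension by exactly $d-1$ and keeps the quotient by $d$ equal to $\Delta$. Any gap above a known embedding dimension can therefore be filled by choosing $d$ appropriately.

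For part (1), fix $k\ge \operatorname{e}(\Delta)$ and set $d=k-(\operatorname{e}(\Delta)-1)\ge 1$.
\begin{itemize}
\item If $d=1$, then $k=\operatorname{e}(\Delta)$ and we take $S=\Delta$, since trivially $\Delta/1=\Delta$.
\item If $d\ge 2$, we need some $a\in\Delta\setminus\{0\}$ with $a+1\in\Delta$. If $\Delta\neq\mathbb{N}$, take $a=\operatorname{F}(\Delta)+1$; then $a$ and $a+1$ both exceed the Frobenius number, so both lie in $\Delta$. If $\Delta=\mathbb{N}$, take $a=1$. In that case one checks that the proofs of Theorem~\ref{th:3.1} and Proposition~\ref{prop:2} never use $\Delta\neq\mathbb{N}$; they only use $a,a+1\in\Delta$ and $\gcd(\operatorname{msg}(\Delta))=1$.
\end{itemize}
With $S=\Delta_d(a)$, Theorem~\ref{th:3.1} gives $S/d=\Delta$. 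Proposition~\ref{prop:2} gives $\operatorname{e}(S)=\operatorname{e}(\Delta)+d-1=k$.

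For part (2), the key extra ingredient is that quotients compose: $(S/d')/d_0=S/(d'd_0)$. This is immediate from the definition, since
\[
x\in (S/d')/d_0\iff d_0x\in S/d'\iff d'd_0x\in S.
\]
Consequently, a multiple of a multiple of $\Delta$ is again a multiple of $\Delta$.

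The set of embedding dimensions of multiples is nonempty, because $\Delta$ itself is one. So $e_{\min}$ is well defined, and we choose a multiple $T$ with $T/d_0=\Delta$ and $\operatorname{e}(T)=e_{\min}$. Given $k\ge e_{\min}$, we proceed as follows.
\begin{itemize}
\item If $k=e_{\min}$, take $T$ itself.
\item Otherwise, apply part (1) to $T$ in place of $\Delta$ with $k\ge e_{\min}=\operatorname{e}(T)$. Concretely, take $d'=k-e_{\min}+1\ge 2$ and $b=\operatorname{F}(T)+1$ (or $b=1$ if $T=\mathbb{N}$), and form $S=T_{d'}(b)$. Then $S/d'=T$ and $\operatorname{e}(S)=e_{\min}+d'-1=k$. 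By the composition rule, $S/(d'd_0)=T/d_0=\Delta$, so $S$ is a multiple of $\Delta$.
\end{itemize}

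The only delicate point is applying the earlier results outside the standing hypothesis $\Delta\neq\mathbb{N}$. The intermediate semigroup $T$ may equal $\mathbb{N}$, for instance when $\Delta=\mathbb{N}$. So I will explicitly verify that Theorem~\ref{th:3.1} and Proposition~\ref{prop:2} hold verbatim for any numerical semigroup in place of $\Delta$, including $\mathbb{N}$, as long as $a$ and $a+1$ lie in it. Everything else is bookkeeping with the formula $\operatorname{e}+d-1$.
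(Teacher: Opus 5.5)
Your proposal is correct and follows the paper's proof: part (1) takes $S=\Delta$, $d=1$ when $k=\operatorname{e}(\Delta)$ and otherwise $S=\Delta_d(a)$ with $d=k-\operatorname{e}(\Delta)+1$, using $\operatorname{e}(\Delta_d(a))=\operatorname{e}(\Delta)+d-1$. Part (2) applies (1) to a multiple $T$ realizing $e_{\min}$ and uses that quotients compose; your explicit choice $a=\operatorname{F}(\Delta)+1$ and your check that Theorem~\ref{th:3.1} and Proposition~\ref{prop:2} also hold for $\mathbb{N}$ (needed only when $T=\mathbb{N}$, i.e.\ $\Delta=\mathbb{N}$) just make explicit details the paper leaves implicit.
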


\begin{proof}
(1) If $k=\operatorname{e}(\Delta)$, take $S=\Delta$ and $d=1$. Otherwise $k>\operatorname{e}(\Delta)$, set
$d=k-(\operatorname{e}(\Delta)-1)\ge 2$, choose $a\in\Delta\setminus\{0\}$ with $a+1\in\Delta$, and take
$S=\Delta_d(a)$. Then $S/d=\Delta$ and $\operatorname{e}(S)=\operatorname{e}(\Delta)+d-1=k$.

(2) By definition there exists a multiple $T$ of $\Delta$ with $\operatorname{e}(T)=e_{\min}$.
Applying (1) to $T$ yields, for each $k\ge e_{\min}$, a multiple $S$ of $T$ with $\operatorname{e}(S)=k$.
Since quotients compose, a multiple of a multiple is again a multiple, hence $S$ is a
multiple of $\Delta$.
\end{proof}

\begin{remark}\label{rem:quotient-rank-cones}
Following \cite{australian}, a numerical semigroup $\Delta$ is called a \emph{$k$-quotient}
if $\Delta=T/d$ for some numerical semigroup $T$ with $\operatorname{e}(T)=k$ and some $d\ge 1$; the
minimum such $k$ is the \emph{quotient rank} of $\Delta$.
In this terminology, $e_{\min}$ equals the quotient rank of $\Delta$, and
Corollary~\ref{cor:embdim-both} says that $k$-quotientability is upward closed in $k$:
if $\Delta$ is a $k_0$-quotient, then it is a $k$-quotient for every $k\ge k_0$.

Moreover, by \cite{cones}, $k$-quotients are exactly the $k$-ray-normalescent numerical
semigroups, i.e.\ those arising as a projection of the integer points in a rational
polyhedral cone with $k$ extreme rays. Consequently, Corollary~\ref{cor:embdim-both} also implies
that $k$-ray-normalescence is upward closed in $k$.
In contrast, computing the threshold $e_{\min}$ (i.e., the quotient rank) remains open in
general; see, for instance, \cite{ojeda3}.
\end{remark}

\subsection{Frobenius-type invariants and Wilf's conjecture}
We now compute closed formulas for the Frobenius-type and related invariants of $\Delta_d(a)$ in terms of $\Delta$, $d$, and $a$.

\begin{theorem}\label{th:3.3}
For the numerical semigroup $\Delta_d(a)$ one has
\begin{align*}
m(\Delta_d(a))&=d\,m(\Delta),\\
\operatorname{F}(\Delta_d(a))&=d\operatorname{F}(\Delta)+da+d-1,\\
\operatorname{g}(\Delta_d(a))&=d\,\operatorname{g}(\Delta)+(d-1)a,\\
\operatorname{n}(\Delta_d(a))&=d\,\operatorname{n}(\Delta)+a,\\
\operatorname{e}(\Delta_d(a))&=\operatorname{e}(\Delta)+d-1,
\end{align*}
where $\operatorname{n}(\Delta)=|\{s\in \Delta:s<\operatorname{F}(\Delta)\}|$.
\end{theorem}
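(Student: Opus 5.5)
Everything comes from a residue-class description of $\Delta_d(a)$. By definition, the elements of $\Delta_d(a)$ congruent to $0$ modulo $d$ are exactly $d\Delta$. For $i\in\{1,\dots,d-1\}$, the elements congruent to $i$ are exactly $da+i+d\Delta$. Write $x=dq+i$ with $0\le i\le d-1$. Then $x\in\Delta_d(a)$ if and only if either $i=0$ and $q\in\Delta$, or $i\ne0$ and $q\in a+\Delta$. The whole proof reduces to counting in two copies of $\Delta$: the class $0$ is modelled by $\Delta$, and each nonzero class by its translate $a+\Delta$.

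\textbf{Step 1 (multiplicity).} The smallest nonzero element of class $0$ is $d\,m(\Delta)$. The smallest element of a nonzero class $i$ is $da+i$. Since $a\in\Delta\setminus\{0\}$, we have $a\ge m(\Delta)$, so $da+i>d\,m(\Delta)$. Hence $m(\Delta_d(a))=d\,m(\Delta)$.

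\textbf{Step 2 (Frobenius number and genus).} In class $0$, the largest gap is $d\operatorname{F}(\Delta)$. In class $i\neq0$, the integers $q$ with $dq+i\notin\Delta_d(a)$ are exactly those with $q\notin a+\Delta$. The largest such $q$ is $a+\operatorname{F}(\Delta)$, giving the gap $d(a+\operatorname{F}(\Delta))+i$. This is maximized at $i=d-1$, which gives $\operatorname{F}(\Delta_d(a))=d\operatorname{F}(\Delta)+da+d-1$.

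For the genus, class $0$ contributes $\operatorname{g}(\Delta)$ gaps. In each nonzero class the gaps are the $q\ge0$ with $q\notin a+\Delta$. These are the $a$ values $0,\dots,a-1$ together with $a+h$ for each gap $h$ of $\Delta$, so there are $a+\operatorname{g}(\Delta)$ of them. Summing, $\operatorname{g}(\Delta_d(a))=\operatorname{g}(\Delta)+(d-1)(a+\operatorname{g}(\Delta))=d\operatorname{g}(\Delta)+(d-1)a$.

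\textbf{Step 3 ($\operatorname{n}$ and $\operatorname{e}$).} For $\operatorname{n}$, use $\operatorname{n}(S)=\operatorname{F}(S)+1-\operatorname{g}(S)$, which holds for every numerical semigroup. With $F'=\operatorname{F}(\Delta_d(a))$ and $g'=\operatorname{g}(\Delta_d(a))$ from Step 2,
\[
F'+1-g'=d\operatorname{F}(\Delta)+da+d-d\operatorname{g}(\Delta)-(d-1)a=d(\operatorname{F}(\Delta)+1-\operatorname{g}(\Delta))+a=d\operatorname{n}(\Delta)+a.
\]
The embedding dimension formula is Proposition~\ref{prop:2}.

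\textbf{Main obstacle.} The only delicate point is stating the residue-class description cleanly, and in particular the equivalence $dq+i\in\Delta_d(a)\iff q\in a+\Delta$ for $i\ne0$. This follows immediately from the definition $\Delta_d(a)=d\Delta\cup\bigcup_i(da+i+d\Delta)$, since the sets in this union lie in distinct residue classes modulo $d$. One should also record that $\Delta\neq\mathbb N$ guarantees $\operatorname{F}(\Delta)\ge1$, so the maximizations in Step 2 are over nonempty sets of gaps. If $\Delta$ were allowed to be $\mathbb N$ (with $\operatorname{F}=-1$), the formulas would still hold with the usual conventions, but that case is excluded throughout the paper anyway.
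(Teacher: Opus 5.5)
Your proposal is correct and follows essentially the same route as the paper. Both use the residue-class description of $\Delta_d(a)$ modulo $d$ to get $\operatorname{F}$ and $\operatorname{g}$, with each nonzero class contributing $a+\operatorname{g}(\Delta)$ gaps; both obtain $\operatorname{n}$ from $\operatorname{F}+1-\operatorname{g}$ and $\operatorname{e}$ from Proposition~\ref{prop:2}. Your explicit multiplicity check via $a\ge m(\Delta)$ simply spells out what the paper calls immediate from the minimal generators.
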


\begin{proof}
By Proposition \ref{prop:2}, the formulas for multiplicity and embedding dimension are immediate from the construction and the minimal generators above.

The Frobenius number is obtained by observing that, by the definition of \(\Delta_d(a)\), the elements congruent to \(0\) modulo \(d\) are exactly those of \(d\Delta\), while the elements congruent to \(i\in\{1,\dots,d-1\}\) are exactly those of \(da+i+d\Delta\). Hence the largest gap in residue class \(i\) is \(da+i+d\operatorname{F}(\Delta)\), and so \[ \operatorname{F}(\Delta_d(a))=d\operatorname{F}(\Delta)+da+d-1. \]

To compute the genus, we count gaps by congruence classes using the same residue-class description. Write $\operatorname{gaps}(\Delta)=\mathbb{N}\setminus \Delta$. In residue class $0$, integers have the form $dk$, and since the only multiples of $d$ in $\Delta_d(a)$ are those in $d\Delta$, we have $dk\in \Delta_d(a)$ if and only if $k\in \Delta$, so this class contributes exactly $\operatorname{g}(\Delta)$ gaps. For $i\in\{1,\dots,d-1\}$ write $x=dk+i$. Then $x\in \Delta_d(a)$ if and only if $x\in da+i+d\Delta$, equivalently $k-a\in \Delta$ (and necessarily $k\ge a$). Therefore the gaps in this class are precisely the $a$ integers with $k<a$, together with those with $k\ge a$ and $k-a\in\operatorname{gaps}(\Delta)$, which contribute $\operatorname{g}(\Delta)$ further gaps. Thus each nonzero residue class contributes $a+\operatorname{g}(\Delta)$ gaps, and summing gives
\[
\operatorname{g}(\Delta_d(a))=\operatorname{g}(\Delta)+(d-1)\bigl(a+\operatorname{g}(\Delta)\bigr)=d\,\operatorname{g}(\Delta)+(d-1)a.
\]

Finally, $\operatorname{n}(\Delta_d(a))=\operatorname{F}(\Delta_d(a))+1-\operatorname{g}(\Delta_d(a))$, and substituting the expressions of $\operatorname{F}(\Delta_d(a))$ and $\operatorname{g}(\Delta_d(a))$ yields $\operatorname{n}(\Delta_d(a))=d\,\operatorname{n}(\Delta)+a$.
\end{proof}

\begin{example}
If $\Delta=\langle 4,5\rangle$, $d=3$, and $a=4$, then $\Delta_3(4)=\langle 12,13,14,15\rangle$. The previous theorem gives $\operatorname{F}(\Delta_3(4))=3\operatorname{F}(\langle 4,5\rangle)+3\cdot 4+2=47$ and $\operatorname{g}(\Delta_3(4))=3\operatorname{g}(\langle 4,5\rangle)+2\cdot 4=26$.
\end{example}

One immediate consequence concerns Wilf's conjecture, which we now recall.

\begin{remark}[Wilf's conjecture]
Let $S$ be a numerical semigroup with embedding dimension $\operatorname{e}(S)$ and conductor $\operatorname{c}(S)=\operatorname{F}(S)+1$. 
Wilf's conjecture (1978) asserts that
\[
\operatorname{e}(S)\,\operatorname{n}(S)\ge \operatorname{c}(S)
\]
for every numerical semigroup $S$. This remains open in general; see, for instance, \cite{EliahouWilf}.
\end{remark}

\begin{corollary}\label{cor:wilf-preserved}
If $\Delta$ satisfies Wilf's conjecture, then $\Delta_d(a)$ also satisfies Wilf's conjecture.
\end{corollary}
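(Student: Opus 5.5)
Using Theorem~\ref{th:3.3}, we substitute the formulas for $\operatorname{e}$, $\operatorname{n}$ and $\operatorname{c}$ of $\Delta_d(a)$ and compare with the hypothesis $\operatorname{e}(\Delta)\operatorname{n}(\Delta)\ge \operatorname{c}(\Delta)=\operatorname{F}(\Delta)+1$. Write $e=\operatorname{e}(\Delta)$, $n=\operatorname{n}(\Delta)$, $F=\operatorname{F}(\Delta)$. Then $\operatorname{c}(\Delta_d(a))=dF+da+d=d(F+1+a)$, and we must show
\[
(e+d-1)(dn+a)\ \ge\ d(F+1)+da .
\]

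Expanding the left side gives $d\,en + ea + (d-1)dn + (d-1)a$. We bound the terms separately. First, $d\,en\ge d(F+1)$ by Wilf's inequality for $\Delta$. It then suffices to show
\[
ea+(d-1)dn+(d-1)a\ \ge\ da,
\]
that is, $(e-1)a+(d-1)dn\ge 0$, which holds trivially since $e\ge 1$, $d\ge 2$, $n\ge 0$ and $a\ge 0$. (In fact $e\ge 2$ because $\Delta\neq\mathbb{N}$, so there is even slack.)

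There is no real obstacle. The only point needing care is the identification $\operatorname{c}(\Delta_d(a))=\operatorname{F}(\Delta_d(a))+1=d(F+1)+da$, which follows from the Frobenius formula in Theorem~\ref{th:3.3}. The conclusion $\operatorname{e}(\Delta_d(a))\operatorname{n}(\Delta_d(a))\ge\operatorname{c}(\Delta_d(a))$ then follows directly by adding the two inequalities above.
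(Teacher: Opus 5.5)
Your proposal is correct and follows the paper's proof: both substitute the formulas of Theorem~\ref{th:3.3} and arrive at the identity $\operatorname{e}(S)\operatorname{n}(S)-\operatorname{c}(S)=d\bigl(\operatorname{e}(\Delta)\operatorname{n}(\Delta)-\operatorname{c}(\Delta)\bigr)+d(d-1)\operatorname{n}(\Delta)+a\bigl(\operatorname{e}(\Delta)-1\bigr)$. Wilf's inequality for $\Delta$ makes the first term nonnegative, and the other two terms are visibly nonnegative.
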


\begin{proof}
Set $S = \Delta_d(a)$. Using $\operatorname{c}(\Delta)=\operatorname{F}(\Delta)+1$ and the formulas from the previous theorem, $\operatorname{e}(S)=\operatorname{e}(\Delta)+d-1$, $\operatorname{n}(S)=d\,\operatorname{n}(\Delta)+a$, $\operatorname{c}(S)=\operatorname{F}(S)+1=d\operatorname{F}(\Delta)+da+d$, we compute
\[
\operatorname{e}(S)\,\operatorname{n}(S)-\operatorname{c}(S)
= d\bigl(\operatorname{e}(\Delta)\operatorname{n}(\Delta)-\operatorname{c}(\Delta)\bigr)+d(d-1)\operatorname{n}(\Delta)+a\bigl(\operatorname{e}(\Delta)-1\bigr).
\]
If $\Delta$ satisfies Wilf's conjecture, then $\operatorname{e}(\Delta)\operatorname{n}(\Delta)-\operatorname{c}(\Delta)\ge 0$, and clearly $d(d-1)\operatorname{n}(\Delta)\ge 0$ and $a(\operatorname{e}(\Delta)-1)\ge 0$. Hence
$\operatorname{e}(S)\,\operatorname{n}(S)\ge \operatorname{c}(S)$, so $S$ satisfies Wilf's conjecture.
\end{proof}

\begin{remark}
Recall that the depth of a numerical semigroup $S$, in the sense of Eliahou \cite{EliahouFromentin}, is
\[
q(S)=\left\lceil \frac{\operatorname{c}(S)}{m(S)}\right\rceil,
\]
where $\operatorname{c}(S)=\operatorname{F}(S)+1$ is the conductor. This invariant is relevant in connection with
Wilf's conjecture: for instance, Wilf's conjecture is known for numerical semigroups of
depth at most three (\cite{EliahouWilf}).

For the family constructed above, Theorem~\ref{th:3.3} gives
\[
\operatorname{c}(\Delta_d(a))=d(\operatorname{c}(\Delta)+a)
\quad\text{and}\quad
m(\Delta_d(a))=d\,m(\Delta).
\]
Hence
\[
q(\Delta_d(a))
=
\left\lceil \frac{\operatorname{c}(\Delta)+a}{m(\Delta)}\right\rceil.
\]
In particular, the depth of $\Delta_d(a)$ is independent of $d$ and satisfies
$q(\Delta_d(a))\ge q(\Delta)$. Thus the construction may produce Wilf semigroups of larger
depth from semigroups already known to satisfy Wilf's conjecture.
\end{remark}

\subsection{Ap\'ery sets}

The formulas above are obtained by counting residue classes modulo $d$. Ap\'ery sets
provide a more refined way to package the same information. We now describe the Ap\'ery
set of $\Delta_d(a)$ with respect to its multiplicity in terms of the Ap\'ery set of $\Delta$.

Recall that, for a numerical semigroup $S$ and a nonzero element $m\in S$, the \emph{Ap\'ery set} of $S$ with respect to $m$ is
\[
\operatorname{Ap}(S,m)=\{\,s\in S:\ s-m\notin S\,\}.
\]
By \cite[Lemma~2.4]{book}, it contains exactly one element in each congruence class modulo $m$; in particular, the cardinality of $\operatorname{Ap}(S,m)$ is $m$.

A useful reduction lemma for Ap\'ery sets is the following (see, e.g., \cite[Theorem 27]{australian}): if $d$ divides $m$ and $m\in S\setminus\{0\}$, then \[\operatorname{Ap}\left(\frac{S}{d},\frac{m}{d}\right)=\left\{\frac{w}{d}: w\in \operatorname{Ap}(S,m),\ w\equiv 0 \pmod d\right\}.\] Applying this identity to $\Delta_d(a)$ identifies the elements of the Ap\'ery set lying in the congruence class $0$ modulo $d$. The remaining congruence classes can be described directly from the definition of $\Delta_d(a)$.

\begin{proposition}
Let $m=m(\Delta)$ and set $S=\Delta_d(a)$. Then
\[
\operatorname{Ap}(S,dm)
=
d\,\operatorname{Ap}(\Delta,m)
\ \cup\
\bigcup_{i=1}^{d-1}
\bigl(da+i+d\,\operatorname{Ap}(\Delta,m)\bigr),
\]
and this union is disjoint.
\end{proposition}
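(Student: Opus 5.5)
We verify the Apéry condition one residue class modulo $d$ at a time, using the description already used in the proof of Theorem~\ref{th:3.3}. The elements of $S=\Delta_d(a)$ congruent to $0$ modulo $d$ are exactly those of $d\Delta$. For $i\in\{1,\dots,d-1\}$, the elements congruent to $i$ modulo $d$ are exactly those of $da+i+d\Delta$. Since $dm\equiv 0\pmod d$, an element $s\in S$ and $s-dm$ always lie in the same class modulo $d$. So the test $s-dm\notin S$ can be carried out inside a single class.

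\emph{Class $0$.} Take $s=dx$ with $x\in\Delta$. Then $s-dm\in S$ if and only if $d(x-m)\in d\Delta$, that is, if and only if $x-m\in\Delta$. Hence $s\in\operatorname{Ap}(S,dm)$ exactly when $x\in\operatorname{Ap}(\Delta,m)$. This gives the piece $d\,\operatorname{Ap}(\Delta,m)$; it also follows from the quoted reduction lemma, since $S/d=\Delta$ and $dm/d=m$.

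\emph{Class $i\neq 0$.} Take $s=da+i+dx$ with $x\in\Delta$, and note that $s-dm=da+i+d(x-m)$. This lies in $S$ if and only if it lies in $da+i+d\Delta$, that is, if and only if $x-m\in\Delta$. The one point needing care is the representation. Every integer congruent to $i$ can be written uniquely as $da+i+dk$ with $k\in\mathbb{Z}$. Such an integer lies in $S$ if and only if $k\in\Delta$, where a negative $k$ is not in $\Delta$. Hence $s\in\operatorname{Ap}(S,dm)$ exactly when $x\in\operatorname{Ap}(\Delta,m)$, which gives the piece $da+i+d\,\operatorname{Ap}(\Delta,m)$.

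\emph{Disjointness and a check.} The $d$ pieces lie in pairwise distinct residue classes modulo $d$, so the union is disjoint. Each piece has $m$ elements, so the union has $dm$ elements, as \cite[Lemma~2.4]{book} requires. There is no real obstacle. The only step needing care is the uniqueness of the representation in each nonzero class, which guarantees that membership of $s-dm$ in $S$ is decided by $x-m\in\Delta$ alone.
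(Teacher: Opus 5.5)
Your proof is correct and follows essentially the same argument as the paper. Both use that the elements of $\Delta_d(a)$ in residue class $i\neq 0$ modulo $d$ are exactly $da+i+d\Delta$, and in class $0$ exactly $d\Delta$, so the test $s-dm\notin S$ reduces class by class to $x-m\notin\Delta$; disjointness then comes from the distinct residues. The only differences are cosmetic: you prove both inclusions at once via an equivalence, and you add the cardinality check $|\operatorname{Ap}(S,dm)|=dm$ against \cite[Lemma~2.4]{book}.
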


\begin{proof}
First note that the elements of $S$ congruent to $0$ modulo $d$ are exactly those of $d\Delta$, and, for
$i\in\{1,\dots,d-1\}$, the elements of $S$ congruent to $i$ modulo $d$ are exactly those of $da+i+d\Delta$.

Let $w\in\operatorname{Ap}(\Delta,m)$. Then $dw\in d\Delta\subseteq S$ and
$dw-dm=d(w-m)\notin S$, because $w-m\notin \Delta$ and the elements of $S$ congruent to $0$ modulo $d$ are exactly those of $d\Delta$. Hence $d\,\operatorname{Ap}(\Delta,m)\subseteq\operatorname{Ap}(S,dm)$.

Now fix $i\in\{1,\dots,d-1\}$ and $w\in\operatorname{Ap}(\Delta,m)$. Then $da+i+dw\in da+i+d\Delta\subseteq S$, and $
(da+i+dw)-dm=da+i+d(w-m)\notin S$, 
because the elements of $S$ congruent to $i$ modulo $d$ are exactly those of $da+i+d\Delta$, and
$w-m\notin \Delta$. Thus $
da+i+d\,\operatorname{Ap}(\Delta,m)\subseteq\operatorname{Ap}(S,dm)
$
for every $i\in\{1,\dots,d-1\}$.

Conversely, let $x\in\operatorname{Ap}(S,dm)$, and write $x\equiv i\pmod d$ with
$i\in\{0,1,\dots,d-1\}$.

If $i=0$, then $x=ds$ for some $s\in \Delta$. Since
$
x-dm=d(s-m)\notin S$,
we get $s-m\notin \Delta$, and hence $s\in\operatorname{Ap}(\Delta,m)$. Therefore
$
x\in d\,\operatorname{Ap}(\Delta,m)$.

If $i\in\{1,\dots,d-1\}$, then $x=da+i+ds$ for some $s\in \Delta$. Since $x-dm=da+i+d(s-m)\notin S$, we get $s-m\notin \Delta$, and hence $s\in\operatorname{Ap}(\Delta,m)$. Therefore
$x\in da+i+d\,\operatorname{Ap}(\Delta,m)$.

Finally, the sets corresponding to different residues modulo $d$ are disjoint. Within a fixed residue class, equality
$da+i+dw=da+i+dw'$ implies $w=w'$, and similarly in the class $0$. Hence the union is disjoint.
\end{proof}

\subsection{Presentations}

The construction $\Delta_d(a)$ is explicit not only at the level of numerical invariants. Since
its minimal generators consist of the dilated generators of $\Delta$ together with the new
generators $da+1,\dots,da+d-1$, one can also lift the factorization structure of $\Delta$.
We now make this precise by describing a presentation of $\Delta_d(a)$ in terms of a
presentation of $\Delta$ and the quadratic relations among the new generators.

Let $S$ be a numerical semigroup, let $\operatorname{msg}(S)=\{n_1,\dots,n_e\}$, and let
$\varphi:\mathbb{N}^e\to S$, $(x_1,\dots,x_e)\mapsto x_1n_1+\cdots+x_en_e$.
The kernel congruence of $\varphi$ is finitely generated, and any finite generating system of this congruence is called a presentation of $S$; see \cite[Chapter~7, Section~1]{book}.

For the semigroup $\Delta_d(a)$ the minimal generators are $dn_1,\dots,dn_e,da+1,\dots,da+d-1$.
Besides the relations coming from a presentation of $\Delta$, one has quadratic relations among the new generators induced by
\begin{align*}
(da+i)+(da+j)&=da+(i+j)+da &&\text{if } i+j<d,\\
(da+i)+(da+j)&=d(a+1)+da &&\text{if } i+j=d,\\
(da+i)+(da+j)&=d(a+1)+(da+i+j-d) &&\text{if } i+j>d.
\end{align*}

Fix factorizations $a=\sum_{r=1}^e u_r n_r$ and
$a+1=\sum_{r=1}^e v_r n_r$ in $\Delta$, and set
$u=(u_1,\dots,u_e)$ and $v=(v_1,\dots,v_e)$. We view \(u\) and \(v\) as elements
of \(\mathbb{N}^{e+d-1}\) by adding \(d-1\) zero coordinates.

Let \(f_i\) denote the coordinate vector corresponding to the generator \(da+i\).
For \(1\le i\le j\le d-1\), the lifted quadratic relations are
\[
f_i+f_j \sim
\begin{cases}
u+f_{i+j}, & \text{if } i+j<d,\\
u+v, & \text{if } i+j=d,\\
v+f_{i+j-d}, & \text{if } i+j>d.
\end{cases}
\]
Let \(\rho_1\) denote the finite family of all these relations.

Now let $\sigma\subseteq \mathbb{N}^e\times \mathbb{N}^e$ be a presentation of $\Delta$. Viewing $\mathbb{N}^e$ as the submonoid of $\mathbb{N}^{e+d-1}$ given by the first $e$ coordinates, define the \emph{lift} of $\sigma$ to $\Delta_d(a)$ as
\[
\rho_2=\{((x,0),(y,0))\in \mathbb{N}^{e+d-1}\times \mathbb{N}^{e+d-1} : (x,y)\in \sigma\},
\]
where $0$ denotes the zero vector in $\mathbb{N}^{d-1}$.

\begin{theorem}
Let $\sigma$ be a presentation of $\Delta$. If $\rho_1$ denotes the family of lifted quadratic relations among the generators $da+1,\dots,da+d-1$, and $\rho_2$ is the lift of $\sigma$ to the generators of $\Delta_d(a)$, then $\rho_1\cup \rho_2$ is a presentation of $\Delta_d(a)$.
\end{theorem}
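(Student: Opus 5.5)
We must show that the congruence $\sim$ generated by $\rho_1\cup\rho_2$ on $\mathbb{N}^{e+d-1}$ equals $\ker\varphi$, where $\varphi$ is the factorization map of $\Delta_d(a)$ with respect to $dn_1,\dots,dn_e,da+1,\dots,da+d-1$. The inclusion of $\sim$ in $\ker\varphi$ is checked generator by generator. The relations in $\rho_2$ hold because $\sum x_rn_r=\sum y_rn_r$ implies $\sum x_r dn_r=\sum y_r dn_r$. Those in $\rho_1$ hold by the three displayed identities together with $\varphi(u)=da$ and $\varphi(v)=d(a+1)$.

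For the reverse inclusion the plan is a normal-form argument. Every $z=(x,w)\in\mathbb{N}^{e}\times\mathbb{N}^{d-1}$ will be shown $\sim$-equivalent to an element with at most one nonzero coordinate among the last $d-1$, and that coordinate equal to $1$. To do this, whenever $|w|=\sum w_i\ge 2$, pick two indices $i\le j$ in the support of $w$ (allowing $i=j$ when $w_i\ge2$) and apply the corresponding relation of $\rho_1$. Each such move replaces $f_i+f_j$ by $u+f_k$, $u+v$ or $v+f_k$, so $|w|$ strictly decreases by at least one. Induction on $|w|$ therefore gives $z\sim (x',0)$ or $z\sim(x',0)+f_k$ for some $k$. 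Because $\varphi$ is constant on $\sim$-classes, the residue of $\varphi(z)$ modulo $d$ is preserved. Hence the normal form is $(x',0)$ if $\varphi(z)\equiv 0\pmod d$, and $(x',0)+f_k$ with $k\equiv\varphi(z)\pmod d$, $1\le k\le d-1$, otherwise. In particular $k$ is determined by $\varphi(z)$.

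Now take $(z,z')\in\ker\varphi$ and reduce both to normal forms $(x,0)+\epsilon f_k$ and $(x',0)+\epsilon f_k$, with the same $\epsilon\in\{0,1\}$ and the same $k$. Since $\varphi$ agrees on the two normal forms, cancelling $\epsilon(da+k)$ gives $\sum x_r dn_r=\sum x'_r dn_r$. Dividing by $d$, $(x,x')$ lies in the kernel congruence of $\Delta$, which is generated by $\sigma$. Thus $(x,0)$ and $(x',0)$ are joined by a chain of elementary $\rho_2$-moves. Adding $\epsilon f_k$ to each term of that chain keeps it a chain of $\sim$-moves, because $\sim$ is compatible with addition. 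Transitivity then gives $z\sim z'$.

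The main obstacle is making the reduction step rigorous. One must check that the relation for the pair $(i,j)$ with $i\le j$ is always available, including the case $i=j$, and that the induction really terminates. Termination follows because each step removes two new-generator coordinates and adds at most one. Cancellation in the final step is harmless, since it takes place inside $\mathbb{N}$ and not inside the congruence. Everything else reduces to the fact that $\sigma$ generates the kernel congruence of $\Delta$.
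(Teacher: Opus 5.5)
Your proof is correct and follows the paper's argument essentially step by step. You use the relations in $\rho_1$ to reduce both factorizations until at most one new generator occurs, use the residue modulo $d$ to see that the same $f_k$ (or none) appears on both sides, cancel it, divide by $d$, and lift a $\sigma$-chain through $\rho_2$. You also check explicitly two points the paper leaves implicit: that $\rho_1\cup\rho_2\subseteq\ker\varphi$, and that adding $\epsilon f_k$ to each term of a chain of elementary moves gives again such a chain.
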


\begin{proof}
Let \(\pi:\mathbb{N}^{e+d-1}\to\Delta_d(a)\) be the factorization homomorphism associated with the generators $dn_1,\dots,dn_e,da+1,\dots,da+d-1$. By the standard characterization of presentations, it is enough to show that the relations in \(\rho_1\cup\rho_2\) connect any two factorizations of the same element.

Let \((\alpha,\beta)\in\ker(\pi)\). For a factorization \(\gamma\), denote by \(\nu(\gamma)\) the number of occurrences, counted with multiplicity, of the new generators \(da+1,\dots,da+d-1\). Each relation in \(\rho_1\) can be used to replace two new generators by an expression involving at most one new generator, and hence decreases \(\nu\). Therefore every factorization is congruent modulo \(\rho_1\) to one with \(\nu\le1\). Thus, after replacing \(\alpha\) and \(\beta\) by congruent factorizations modulo \(\rho_1\), we may assume that \(\nu(\alpha)\le1\) and \(\nu(\beta)\le1\).

If \(\nu(\alpha)=0\), then \(\pi(\alpha)\in d\Delta\). Since
\(\pi(\alpha)=\pi(\beta)\), also \(\nu(\beta)=0\). Dividing the equality \(\pi(\alpha)=\pi(\beta)\) by \(d\), we obtain an equality of factorizations in \(\Delta\). Hence \(\alpha\) and \(\beta\) are congruent modulo the lifted relations \(\rho_2\).

If \(\nu(\alpha)=1\), then also \(\nu(\beta)=1\), and the unique new generator appearing  in both factorizations has the same residue modulo \(d\). Hence both factorizations use the same generator \(da+i\), for some \(i\in\{1,\dots,d-1\}\). Removing this common coordinate from both factorizations, the remaining parts give an equality in \(d\Delta\). Dividing by \(d\), the corresponding factorizations in \(\Delta\) are congruent modulo \(\sigma\), and therefore the original factorizations are congruent modulo \(\rho_2\).

Consequently, after reduction by relations in \(\rho_1\), any two factorizations of the same element are connected by relations in \(\rho_2\). Since the reductions themselves use only relations in \(\rho_1\), the original factorizations are connected by relations
in \(\rho_1\cup\rho_2\). Thus \(\rho_1\cup\rho_2\) is a presentation of \(\Delta_d(a)\).
\end{proof}

We illustrate the previous theorem with a concrete example.

\begin{example}
Let $\Delta=\langle n_1=5,\; n_2=7,\; n_3=9\rangle$, $d=4$, and $a=9$. Then
\[
\Delta_4(9)=\langle 20,28,36,37,38,39\rangle.
\]
A presentation of $\Delta=\langle 5,7,9\rangle$ is given by the following pairs in $\mathbb{N}^3\times\mathbb{N}^3$:
\[
\sigma=\{((5,0,0),(0,1,2)),\ ((0,2,0),(1,0,1)),\ ((0,0,3),(4,1,0))\},
\]
corresponding to the equalities $5\cdot 5=7+2\cdot 9$, $2\cdot 7=5+9$, and $3\cdot 9=4\cdot 5+7$. 
The quadratic relations among $37$, $38$, and $39$ come from rewriting $37+37,\,37+38,\,37+39,\,38+38,\,38+39,$ and $39+39$ via the identities in the text and expressing the multiples $36$ and $40$
in terms of $20$, $28$, and $36$.
\end{example}

\section{\texorpdfstring{Rank in $\mathcal{M}_d(\Delta)$}{Rank in Md(Delta)}}\label{Sect4}

The construction of Section~\ref{Sect3} adjoins $d-1$ generators outside $d\Delta$, one in each nonzero residue class modulo $d$. More generally, by Theorem~\ref{th:Md-fiber}, every element of $\mathcal{M}_d(\Delta)$ is obtained from $d\Delta$ by adjoining finitely many elements outside $d\Delta$.  We use this to introduce the $\mathcal{M}_d(\Delta)$-rank and study the induced filtration, with special emphasis on rank one.

\subsection{\texorpdfstring{$\mathcal{M}_d(\Delta)$-generators and rank}{Md(Delta)-generators and rank}}

Recall from Definition~\ref{def:2.1} that a submonoid $M\subseteq(\mathbb{N},+)$ is an $\mathcal{M}_d(\Delta)$-monoid if $M/d=\Delta$. We will repeatedly use that $\mathcal{M}_d(\Delta)$ is closed under finite intersections and that $(S\cap T)/d=(S/d)\cap(T/d)$ for numerical semigroups $S,T$; see Proposition~\ref{prop:2.6}.

\begin{proposition}\label{prop:arb-inter}
A submonoid $M\subseteq(\mathbb{N},+)$ is a nonempty intersection of elements of $\mathcal{M}_d(\Delta)$ if and only if $M/d=\Delta$.
\end{proposition}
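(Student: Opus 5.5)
The forward direction is the easy one. Suppose $M=\bigcap_{i\in I}S_i$ with $I\neq\varnothing$ and each $S_i\in\mathcal{M}_d(\Delta)$. As in the proof of Proposition~\ref{prop:2.6}, for $x\in\mathbb{N}$ we have $x\in M/d$ if and only if $dx\in S_i$ for all $i$, that is, if and only if $x\in\bigcap_i (S_i/d)=\Delta$. The argument there never used finiteness of the index set, so it applies to arbitrary nonempty intersections and gives $M/d=\Delta$. An intersection of submonoids is a submonoid, so this direction is done.

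For the converse, let $M$ be a submonoid with $M/d=\Delta$. I would realize $M$ as the intersection of the family
\[
S_k=M\cup\{x\in\mathbb{N}: x\ge k,\ x\not\equiv 0 \pmod d\}\cup d\Delta, \qquad k\ge 1.
\]
A cleaner version is to set $S_k=M\cup (d\Delta\cup\{x\ge k: d\nmid x\})$. The plan has four steps.

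\emph{Submonoid.} $T_k=d\Delta\cup\{x\ge k: d\nmid x\}$ is not closed under addition in general: two elements in nonzero classes can sum to a multiple of $d$ outside $d\Delta$. So I would instead take
\[
S_k=M\cup\{x\in\mathbb{N}: x\ge k,\ d\nmid x\}\cup\{dy: y\in\Delta\}.
\]
To make sums safe I would choose $k$ large. If $x,x'\ge k$ with $d\nmid x,x'$ and $d\mid x+x'$, then $(x+x')/d\ge 2k/d$. Taking $k> d\operatorname{F}(\Delta)$ forces $(x+x')/d\in\Delta$, so the sum lies in $d\Delta\subseteq S_k$. Every other kind of sum either lands in $M$, or is $\ge k$ and not divisible by $d$, or lies in $d\Delta$. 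Note that $M\subseteq\Delta$ gives $d\Delta\subseteq M$ already, by the Proposition characterizing $\mathcal{M}_d(\Delta)$-monoids. Hence $S_k$ is a submonoid with finite complement, i.e. a numerical semigroup.

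\emph{Quotient.} The multiples of $d$ in $S_k$ are those of $M$ together with $d\Delta$, and both equal $d\Delta$. Hence $S_k/d=\Delta$ and $S_k\in\mathcal{M}_d(\Delta)$.

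\emph{Intersection.} Let $x\notin M$. If $d\mid x$, then $x\notin d\Delta$, so $x\notin S_k$ for every $k$. If $d\nmid x$, then $x\notin S_k$ as soon as $k>x$. Thus $\bigcap_{k>d\operatorname{F}(\Delta)} S_k=M$, and the family is nonempty.

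The main obstacle is the closure of $S_k$ under addition, specifically sums of two "new" elements that fall into residue class $0$; this is exactly what the choice $k>d\operatorname{F}(\Delta)$ handles. Sums of an element of $M$ with a new element are harmless, since $d\nmid x$ and $d\mid m$ give $d\nmid x+m$, while $d\nmid m$ gives a sum that is again $\ge k$ with some nonzero residue, unless $d\mid x+m$. That last case also needs care: there I would use $x+m\ge k>d\operatorname{F}(\Delta)$ to get $x+m\in d\Delta$ once more.
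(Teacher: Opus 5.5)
Your proof is correct and is essentially the paper's argument. Since $k>d\operatorname{F}(\Delta)$, every multiple of $d$ that is $\ge k$ already lies in $d\Delta\subseteq M$, so your $S_k$ is exactly the paper's $S_n=M\cup\{n,n+1,\dots\}$; the restriction $d\nmid x$ and the case analysis it forces for closure under addition are unnecessary. One small slip: $d\Delta\subseteq M$ follows from $M/d=\Delta$, not from $M\subseteq\Delta$.
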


\begin{proof}
If $M=\bigcap_{i\in I}S_i$ with $I\neq\varnothing$ and
$S_i\in\mathcal{M}_d(\Delta)$ for all $i\in I$, then for $x\in\mathbb{N}$ one has
\[
x\in M/d \iff dx\in M \iff dx\in S_i \text{ for all } i\in I
\iff x\in S_i/d \text{ for all } i\in I,
\]
so $M/d=\bigcap_{i\in I}(S_i/d)=\Delta$.

Conversely, assume $M/d=\Delta$. For each integer $n>d\operatorname{F}(\Delta)$ set
$S_n:=M\cup\{n,n+1,\dots\}$.
Then $S_n$ is a numerical semigroup and $S_n/d=\Delta$: clearly
$\Delta\subseteq S_n/d$, and if $x\notin\Delta$, then $x\le \operatorname{F}(\Delta)$, hence
$dx\le d\operatorname{F}(\Delta)<n$, so $dx\notin\{n,n+1,\dots\}$ and $dx\notin M$, whence
$x\notin S_n/d$.
Finally,
\[
M=\bigcap_{n>d\operatorname{F}(\Delta)}S_n,
\]
since each $S_n$ contains $M$ and, for $x\notin M$, taking
$n>\max\{d\operatorname{F}(\Delta),x\}$ yields $x\notin S_n$.
\end{proof}

\begin{definition}
A subset $X\subseteq\mathbb{N}$ is called an $\mathcal{M}_d(\Delta)$-set if it is contained in some element of $\mathcal{M}_d(\Delta)$. In that case, we denote by $\mathcal{M}_d(\Delta)[X]$ the intersection of all $S\in\mathcal{M}_d(\Delta)$ with $X\subseteq S$.
\end{definition}

By Corollary~\ref{cor:2.9} and Proposition~\ref{prop:arb-inter}, a subset
$X\subseteq\mathbb N$ is an $\mathcal{M}_d(\Delta)$-set if and only if
$\langle X\rangle\cap d(\mathbb N\setminus\Delta)=\varnothing$.

\begin{proposition}\label{prop:Md-closure}
If $X$ is an $\mathcal{M}_d(\Delta)$-set, then
$\mathcal{M}_d(\Delta)[X]=\langle X\rangle+d\Delta$.
\end{proposition}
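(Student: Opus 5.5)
We prove the two inclusions between $\mathcal{M}_d(\Delta)[X]$ and $N:=\langle X\rangle+d\Delta$, using only Corollary~\ref{cor:2.9} and Proposition~\ref{prop:arb-inter}. Since $X$ is an $\mathcal{M}_d(\Delta)$-set, the remark following the definition gives $\langle X\rangle\cap d(\mathbb{N}\setminus\Delta)=\varnothing$. Corollary~\ref{cor:2.9} then tells us that $N$ is an $\mathcal{M}_d(\Delta)$-monoid, i.e.\ $N/d=\Delta$, and that it is the smallest such monoid containing $X$.

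\emph{Inclusion $N\subseteq\mathcal{M}_d(\Delta)[X]$.} The family of $S\in\mathcal{M}_d(\Delta)$ with $X\subseteq S$ is nonempty, since $X$ is an $\mathcal{M}_d(\Delta)$-set. Each such $S$ is a numerical semigroup with $S/d=\Delta$, hence an $\mathcal{M}_d(\Delta)$-monoid containing $X$. By the minimality in Corollary~\ref{cor:2.9}, $N\subseteq S$ for every such $S$. Intersecting over all of them gives $N\subseteq\mathcal{M}_d(\Delta)[X]$. (Directly: $\langle X\rangle\subseteq S$ and $d\Delta\subseteq S$.)

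\emph{Inclusion $\mathcal{M}_d(\Delta)[X]\subseteq N$.} Since $N/d=\Delta$, Proposition~\ref{prop:arb-inter} writes $N=\bigcap_{n>d\operatorname{F}(\Delta)}S_n$ with $S_n=N\cup\{n,n+1,\dots\}\in\mathcal{M}_d(\Delta)$. Each $S_n$ contains $N\supseteq X$, so each $S_n$ appears in the intersection defining $\mathcal{M}_d(\Delta)[X]$. Therefore $\mathcal{M}_d(\Delta)[X]\subseteq\bigcap_n S_n=N$.

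The only delicate point is this second inclusion. Corollary~\ref{cor:2.9} alone only concerns monoids, whereas $\mathcal{M}_d(\Delta)[X]$ is an intersection over numerical semigroups. We need enough numerical semigroups in the fiber containing $X$ to cut down exactly to $N$, and the explicit family $S_n$ from the proof of Proposition~\ref{prop:arb-inter} supplies them. It also guarantees that the intersecting family is nonempty, so the intersection is well defined.
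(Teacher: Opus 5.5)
Your proof is correct and follows the paper's argument essentially step for step. The easy inclusion comes from $\langle X\rangle\subseteq S$ and $d\Delta\subseteq S$. For the reverse inclusion you write $\langle X\rangle+d\Delta$ as a nonempty intersection of elements of $\mathcal{M}_d(\Delta)$ that contain $X$, via Corollary~\ref{cor:2.9} and Proposition~\ref{prop:arb-inter}. The only cosmetic difference is that you use the explicit semigroups $S_n$ from that proposition's proof rather than just its statement.
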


\begin{proof}
Let $N=\mathcal{M}_d(\Delta)[X]$, the intersection of all
$S\in\mathcal{M}_d(\Delta)$ with $X\subseteq S$.
Since each such $S$ contains $\langle X\rangle$ and $d\Delta$, we have
$\langle X\rangle+d\Delta\subseteq N$.

Conversely, since $X$ is an $\mathcal{M}_d(\Delta)$-set, we have
$\langle X\rangle\cap d(\mathbb{N}\setminus\Delta)=\varnothing$.
Hence, by Corollary~\ref{cor:2.9},
$M_0:=\langle X\rangle+d\Delta$ is an $\mathcal{M}_d(\Delta)$-monoid.
By Proposition~\ref{prop:arb-inter}, $M_0$ can be written as a nonempty intersection
of elements of $\mathcal{M}_d(\Delta)$, say
$M_0=\bigcap_{i\in I} S_i$, with $I\neq\varnothing$ and
$S_i\in\mathcal{M}_d(\Delta)$ for all $i\in I$.
Since $X\subseteq M_0\subseteq S_i$ for all $i\in I$, the defining intersection
$N=\mathcal{M}_d(\Delta)[X]$ is contained in each $S_i$, hence
$N\subseteq \bigcap_{i\in I} S_i=M_0$.

Therefore $\mathcal{M}_d(\Delta)[X]=N=\langle X\rangle+d\Delta$.
\end{proof}

\begin{definition}
Let $M$ be an $\mathcal{M}_d(\Delta)$-monoid and let $X\subseteq M$. We say that $X$ is an \emph{$\mathcal{M}_d(\Delta)$-system of generators} of $M$ if $M=\langle X\rangle+d\Delta$.
It is \emph{minimal} if no proper subset of $X$ has this property.
\end{definition}

We will use that every submonoid $M\subseteq(\mathbb{N},+)$ has a unique finite minimal system of generators $\operatorname{msg}(M)$; see \cite[Corollary~2.8]{book}.

\begin{theorem}\label{thm:Md-min-gen}
Let $M$ be an $\mathcal{M}_d(\Delta)$-monoid and set
\[
A=\{x\in \operatorname{msg}(M): x\notin d\Delta\}.
\]
Then $A$ is the unique minimal $\mathcal{M}_d(\Delta)$-system of generators of $M$.
\end{theorem}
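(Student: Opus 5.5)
We will show three things: $A$ is an $\mathcal{M}_d(\Delta)$-system of generators of $M$, it is minimal, and every $\mathcal{M}_d(\Delta)$-system of generators of $M$ contains $A$. The last point gives uniqueness, since any minimal system $Y$ contains $A$ and $A$ already generates, so minimality of $Y$ forces $Y=A$.

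\emph{Generation.} Since $M=\langle \operatorname{msg}(M)\rangle$ and $\operatorname{msg}(M)\setminus A\subseteq d\Delta$, we get $M\subseteq\langle A\rangle+d\Delta$. Conversely, $A\subseteq M$ and $d\Delta\subseteq M$, the latter because $M/d=\Delta$. As $M$ is a monoid, $\langle A\rangle+d\Delta\subseteq M$. This part is exactly the argument already used in Theorem~\ref{th:2.8}.

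\emph{Every system contains $A$.} This is the key step. Let $X\subseteq M$ with $M=\langle X\rangle+d\Delta$, and let $x\in A$. Then $x=x_1+\dots+x_k+d\delta$ with $x_j\in X$ and $\delta\in\Delta$. Every summand lies in $M$: the $x_j$ because $X\subseteq M$, and $d\delta$ because $d\Delta\subseteq M$. Since $x\in\operatorname{msg}(M)$, it is not a sum of two or more nonzero elements of $M$, so exactly one summand is nonzero and equals $x$. That summand cannot be $d\delta$, because $x\notin d\Delta$. Hence $x=x_j\in X$ for some $j$, and $A\subseteq X$.

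\emph{Minimality.} Let $A'\subsetneq A$. By the previous step applied to $X=A'$, if $A'$ generated $M$ in the $\mathcal{M}_d(\Delta)$ sense we would have $A\subseteq A'$, which is impossible. So $A$ is minimal, and by the containment argument it is the unique minimal system.

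The only subtle point is the case analysis in the containment step. It relies on the fact that elements of $d\Delta$ lie in $M$, so they count as elements of $M$ when invoking the indecomposability of minimal generators. It also uses that $0$ summands can be discarded. No finiteness of $A$ is needed beyond the finiteness of $\operatorname{msg}(M)$ from \cite[Corollary~2.8]{book}.
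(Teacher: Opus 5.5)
Your proposal is correct and follows essentially the paper's proof. Generation comes from the argument of Theorem~\ref{th:2.8}, and the key step is the containment $A\subseteq B$ for every $\mathcal{M}_d(\Delta)$-system of generators $B$, obtained from the indecomposability of minimal generators of $M$ together with $x\notin d\Delta$. You also spell out how minimality and uniqueness follow from this containment, which the paper leaves implicit.
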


\begin{proof}
By Theorem~\ref{th:2.8}, every $\mathcal{M}_d(\Delta)$-monoid $M$ admits a decomposition $M=\langle A\rangle+d\Delta$ with 
$A=\{x\in \operatorname{msg}(M): x\notin d\Delta\}$. In particular, $A$ is an $\mathcal{M}_d(\Delta)$-system of generators of $M$.

To prove minimality and uniqueness, let $B\subseteq M$ be any $\mathcal{M}_d(\Delta)$-system of generators of $M$. By definition,
$M=\langle B\rangle+d\Delta$. Fix $x\in A$. Since $x\in M$, we can write $x=b+u$ with $b\in\langle B\rangle$ and $u\in d\Delta$.
Because $x\notin d\Delta$, necessarily $u\neq x$. If $u>0$, then $x=b+u$ expresses $x$ as a sum of two nonzero elements of $M$
(because $\langle B\rangle\subseteq M$ and $d\Delta\subseteq M$), contradicting $x\in\operatorname{msg}(M)$. Therefore $u=0$ and $x=b\in\langle B\rangle$. Since $x$ is a minimal generator of $M$, this forces $x\in B$. Hence $A\subseteq B$ for every $\mathcal{M}_d(\Delta)$-system of generators $B$ of $M$, so $A$ is the unique minimal one.
\end{proof}

By Theorem~\ref{thm:Md-min-gen}, every \(\mathcal{M}_d(\Delta)\)-monoid \(M\) has a unique
minimal \(\mathcal{M}_d(\Delta)\)-system of generators.

\begin{definition}
Let \(M\) be an \(\mathcal{M}_d(\Delta)\)-monoid. We denote by
$\operatorname{msg}_{\mathcal{M}_d(\Delta)}(M)$ the unique minimal \(\mathcal{M}_d(\Delta)\)-system of generators of \(M\),
and we define the \(\mathcal{M}_d(\Delta)\)-rank of \(M\) as
\[
\operatorname{rank}_{\mathcal{M}_d(\Delta)}(M)
=
\left|\operatorname{msg}_{\mathcal{M}_d(\Delta)}(M)\right|.
\]
\end{definition}

Thus \(\operatorname{rank}_{\mathcal{M}_d(\Delta)}(M)\) measures how many generators outside \(d\Delta\) are needed to recover \(M\) in the fixed-quotient setting. By Theorem~\ref{thm:Md-min-gen}, $\operatorname{msg}_{\mathcal{M}_d(\Delta)}(M) =
\operatorname{msg}(M)\setminus d\Delta$.

\begin{example}\label{ex:Sda-rank}
Let $\Delta$ be a numerical semigroup, let $d\ge 2$, and choose $a\in\Delta\setminus\{0\}$ such that $a+1\in\Delta$.
Set $T:=\Delta_d(a)$, i.e. $T=d\Delta\ \cup\ \bigl(\{da+1,da+2,\dots,da+d-1\}+d\Delta\bigr)$. By Theorem~\ref{th:3.1}, one has $T/d=\Delta$, hence $T\in\mathcal{M}_d(\Delta)$.
Moreover, Proposition~\ref{prop:2} gives
\[
\operatorname{msg}(T)=d\,\operatorname{msg}(\Delta)\ \cup\ \{da+1,\dots,da+d-1\}.
\]
Therefore the unique minimal $\mathcal{M}_d(\Delta)$-system of generators of $T$ is $\{da+1,\dots,da+d-1\}$ (Theorem~\ref{thm:Md-min-gen}), and
$\operatorname{rank}_{\mathcal{M}_d(\Delta)}(T)=d-1$.
In particular, for $d=2$ this construction yields rank-one elements of $\mathcal{M}_2(\Delta)$.
\end{example}

The rank-zero case consists only of the monoid $d\Delta$, which is not a numerical
semigroup when $d\ge2$. Thus the first genuinely numerical layer of the fiber is rank
one. We now characterize this layer completely.

\subsection{Extremal rank and embedding dimension}

We now record two consequences of the description of minimal
\(\mathcal{M}_d(\Delta)\)-systems of generators. We first compute the maximal value of the rank on the fiber, and then relate the relative rank to the embedding dimension.

\begin{proposition}\label{prop:maxrank}
Let \(m=m(\Delta)\). Then
\[
\max\{\operatorname{rank}_{\mathcal{M}_d(\Delta)}(S): S\in
\mathcal{M}_d(\Delta)\}=(d-1)m.
\]
\end{proposition}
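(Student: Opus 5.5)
Upper bound. Let $S\in\mathcal{M}_d(\Delta)$. By Theorem~\ref{thm:Md-min-gen}, $\operatorname{msg}_{\mathcal{M}_d(\Delta)}(S)=\operatorname{msg}(S)\setminus d\Delta$, and since $S\cap d\mathbb{N}=d\Delta$, this is exactly the set of minimal generators of $S$ not divisible by $d$. The key element is $dm\in d\Delta\subseteq S$, where $m=m(\Delta)$. Distinct minimal generators of $S$ are pairwise incongruent modulo any nonzero element of $S$ (standard: $\operatorname{msg}(S)\subseteq\operatorname{Ap}(S,n)\cup\{n\}$ for every $n\in S\setminus\{0\}$). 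So I apply this with $n=dm$: every minimal generator other than $dm$ lies in $\operatorname{Ap}(S,dm)$, which has one element in each class modulo $dm$. The generators not divisible by $d$ lie in classes modulo $dm$ that are not $\equiv 0 \pmod d$. There are exactly $dm-m=(d-1)m$ such classes. Hence $\operatorname{rank}_{\mathcal{M}_d(\Delta)}(S)\le (d-1)m$.

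Lower bound. I need some $S\in\mathcal{M}_d(\Delta)$ in which every element of $\operatorname{Ap}(S,dm)$ outside $d\mathbb{N}$ is a minimal generator. I would imitate the maximal embedding dimension construction. Pick a large integer $K$ (for instance $K>dF(\Delta)+dm$) and set
\[
S=d\Delta\cup\{x\in\mathbb{N}: x\ge K,\ d\nmid x\}.
\]
It is closed under addition: the sum of two large non-multiples is $\ge 2K$, and if it is divisible by $d$ it is a multiple of $d$ exceeding $dF(\Delta)$, hence in $d\Delta$. Adding an element of $d\Delta$ to a large non-multiple keeps it large and a non-multiple. The gcd is $1$ and $S\cap d\mathbb{N}=d\Delta$, so $S\in\mathcal{M}_d(\Delta)$.

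Counting the new generators. For each residue class modulo $dm$ not divisible by $d$, the Apéry element is its least representative $x\ge K$, so $K\le x<K+dm$. Such an $x$ is a minimal generator. A decomposition $x=s+t$ with both summands nonzero cannot use two non-multiples, since that sum is at least $2K>K+dm$. So it would have to be $x=t+u$ with $t\ge K$ a non-multiple and $u\in d\Delta\setminus\{0\}$. Then $u\ge dm$, so $t\le x-dm<K$, a contradiction. This gives $(d-1)m$ generators outside $d\Delta$, and equality holds.

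The step needing most care is the upper bound. I must justify cleanly that distinct minimal generators are incongruent modulo $dm$: if $g\ne g'$ with $g'=g+k\,dm$ and $k>0$, then $g'$ decomposes as $g$ plus a nonzero element of $S$. I must also check that $dm$ itself is not counted, which holds since $dm\in d\Delta$. The construction side is routine once $K$ is taken large.
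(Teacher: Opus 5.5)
Your proof is correct and follows essentially the same route as the paper. For the upper bound, you use that minimal generators are pairwise incongruent modulo $dm$ and count the $(d-1)m$ classes not divisible by $d$. For the lower bound, your semigroup $d\Delta\cup\{x\ge K: d\nmid x\}$ has the same shape as the paper's $S_B=\langle X_B\rangle+d\Delta$, which equals $d\Delta\cup\{x\ge B: d\nmid x\}$, and you prove minimality of the elements in $[K,K+dm)$ by the same size argument; the only cosmetic difference is that you check closure and $S/d=\Delta$ by hand instead of invoking Theorem~\ref{th:Md-fiber}.
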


\begin{proof}
Let \(S\in\mathcal{M}_d(\Delta)\), and set
\(A=\operatorname{msg}_{\mathcal{M}_d(\Delta)}(S)\). By Theorem~\ref{thm:Md-min-gen},
\(A=\operatorname{msg}(S)\setminus d\Delta\). Since \(dm\in S\), two distinct elements
of \(A\) cannot be congruent modulo \(dm\), for then the larger one would be the sum of
the smaller one and a nonzero element of \(S\). Moreover, no element of \(A\) is divisible
by \(d\), because the elements of \(S\) divisible by \(d\) are precisely those of \(d\Delta\).
Hence \(A\) meets at most the congruence classes modulo \(dm\) which are not divisible
by \(d\), and there are \((d-1)m\) such classes. Thus
\(\operatorname{rank}_{\mathcal{M}_d(\Delta)}(S)=|A|\le (d-1)m\).

Conversely, choose a multiple \(B\) of \(dm\) such that \(B>d\operatorname{F}(\Delta)\), and set
\(X_B=\{B+r:0\le r<dm,\ d\nmid r\}\). Then \(X_B\subseteq\Delta\), since every
element of \(X_B\) is greater than \(\operatorname{F}(\Delta)\). Also \(|X_B|=(d-1)m\) and
\(X_B\cap d\Delta=\emptyset\). Moreover,
\(\langle X_B\rangle\cap d(\mathbb N\setminus\Delta)=\emptyset\): indeed, if
\(z\in\langle X_B\rangle\cap d\mathbb N\) and \(z\ne0\), then \(z\ge B>d\operatorname{F}(\Delta)\);
writing \(z=dn\), we get \(n>\operatorname{F}(\Delta)\), and hence \(n\in\Delta\). Finally,
\(\gcd(X_B\cup d\Delta)=1\), since \(B+1\in X_B\) and \(\gcd(B+1,d)=1\).
Hence, by Theorem~\ref{th:Md-fiber}, \(S_B:=\langle X_B\rangle+d\Delta\) belongs to
\(\mathcal{M}_d(\Delta)\).

We claim that \(X_B\subseteq\operatorname{msg}(S_B)\). Let \(y\in X_B\) and write
\(y=u+d\delta\), with \(u\in\langle X_B\rangle\) and \(\delta\in\Delta\). Since
\(y<B+dm\le 2B\), either \(u=0\) or \(u=x\in X_B\). The case \(u=0\) is impossible
because \(d\nmid y\). If \(u=x\in X_B\), then \(y=x+d\delta\). For \(x\ne y\), this
gives \(0<y-x<dm\), while \(d\delta\) is either \(0\) or at least \(dm\), a contradiction.
Hence \(x=y\) and \(\delta=0\). Thus \(y\) is a minimal generator of \(S_B\).

Therefore \(X_B\subseteq\operatorname{msg}(S_B)\setminus d\Delta\), and by
Theorem~\ref{thm:Md-min-gen},
\(\operatorname{rank}_{\mathcal{M}_d(\Delta)}(S_B)\ge |X_B|=(d-1)m\). Together with
the upper bound, this proves the result.
\end{proof}

The previous proposition shows that the rank filtration has a finite top
layer. The explicit family \(\Delta_d(a)\) introduced in
Section~\ref{Sect3} does not reach this top layer if
\(\Delta\ne\mathbb N\). Indeed, by Example~\ref{ex:Sda-rank},
\(\operatorname{rank}_{\mathcal{M}_d(\Delta)}(\Delta_d(a))=d-1\), whereas
Proposition~\ref{prop:maxrank} shows that the maximal rank is
\((d-1)m(\Delta)\). Since \(\Delta\ne\mathbb N\), we have
\(m(\Delta)>1\), and hence \(d-1 < (d-1)m(\Delta)\).

We next explain how the relative rank controls the ordinary embedding
dimension.

\begin{proposition}\label{prop:4.9}
Let \(S\in\mathcal{M}_d(\Delta)\). If
\(A=\operatorname{msg}_{\mathcal{M}_d(\Delta)}(S)\), then
\begin{enumerate}
    \item \(\operatorname{msg}(S) = A\cup \{dn:n\in\operatorname{msg}(\Delta),\ dn\notin\langle A\rangle\}\).
    \item \(\operatorname{e}(S) = \operatorname{e}(\Delta)+ |A| - \left| \{n\in\operatorname{msg}(\Delta):dn\in\langle A\rangle\} \right|\).
\end{enumerate}
\end{proposition}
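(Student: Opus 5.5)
We start from Theorem~\ref{thm:Md-min-gen}, which gives $A=\operatorname{msg}(S)\setminus d\Delta$. So it suffices to decide which elements of $\operatorname{msg}(S)$ lie in $d\Delta$, that is, to prove
\[
\operatorname{msg}(S)\cap d\Delta=\{dn:n\in\operatorname{msg}(\Delta),\ dn\notin\langle A\rangle\}.
\]
Part (2) then follows by counting. The union in (1) is disjoint, because $A\cap d\Delta=\varnothing$. The map $n\mapsto dn$ is injective, so the second set has cardinality $\operatorname{e}(\Delta)-|\{n\in\operatorname{msg}(\Delta):dn\in\langle A\rangle\}|$.

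For the inclusion ``$\subseteq$'', let $x\in\operatorname{msg}(S)\cap d\Delta$ and write $x=dy$ with $y\in\Delta$. We first show $y\in\operatorname{msg}(\Delta)$. If $y=y_1+y_2$ with $y_1,y_2\in\Delta\setminus\{0\}$, then $x=dy_1+dy_2$ with both summands nonzero elements of $d\Delta\subseteq S$, contradicting minimality. Next, $x\notin\langle A\rangle$: otherwise $x$ is a nonnegative combination of elements of $A$. It is not a single element of $A$, since $A\cap d\Delta=\varnothing$, so it is a sum of at least two nonzero elements of $S$, again a contradiction.

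For the inclusion ``$\supseteq$'', let $n\in\operatorname{msg}(\Delta)$ with $dn\notin\langle A\rangle$, and suppose $dn=s_1+s_2$ with $s_1,s_2\in S\setminus\{0\}$. This is the main step. Since $S=\langle A\rangle+d\Delta$, write $dn=\alpha+d\delta$ with $\alpha\in\langle A\rangle$ and $\delta\in\Delta$, chosen from the decomposition of $s_1+s_2$. Then $\alpha=d(n-\delta)$ is a multiple of $d$ lying in $\langle A\rangle\subseteq S$. Hence $n-\delta\in S/d=\Delta$, and $\alpha\in d\Delta$. Writing $\alpha=d\gamma$ with $\gamma\in\Delta$, we get $n=\gamma+\delta$ with $\gamma,\delta\in\Delta$.

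Because $n$ is a minimal generator of $\Delta$, either $\gamma=0$ or $\delta=0$.
\begin{itemize}
\item If $\delta=0$, then $dn=\alpha\in\langle A\rangle$, contrary to the hypothesis.
\item If $\gamma=0$, then $\alpha=0$, so all the $A$-parts of $s_1$ and $s_2$ vanish. Both $s_i$ then lie in $d\Delta$, say $s_i=d\delta_i$ with $\delta_i\neq0$, and $n=\delta_1+\delta_2$ contradicts $n\in\operatorname{msg}(\Delta)$.
\end{itemize}

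The point needing care is making the decomposition compatible with $s_1+s_2$. We write $s_i=\alpha_i+d\delta_i$ with $\alpha_i\in\langle A\rangle$ and $\delta_i\in\Delta$, and set $\alpha=\alpha_1+\alpha_2$, $\delta=\delta_1+\delta_2$. The case $\gamma=0$ then forces $\alpha_1=\alpha_2=0$. In the case $\delta=0$ we have $\delta_1=\delta_2=0$, so $dn=\alpha_1+\alpha_2\in\langle A\rangle$, which is excluded. Thus $dn$ is a minimal generator of $S$, and (1) follows. We also use that $\operatorname{msg}(S)\subseteq A\cup d\Delta$, which holds because every element of $S$ not in $d\Delta$ and belonging to $\operatorname{msg}(S)$ lies in $A$ by definition.
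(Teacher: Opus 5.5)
Your proof is correct and takes essentially the same approach as the paper. The key step is identical: write $dn=\alpha+d\delta$, use $S/d=\Delta$ to get $\alpha\in d\Delta$, then apply minimality of $n$ in $\Delta$. The only difference is cosmetic: you characterize minimal generators as elements that are not a sum of two nonzero elements of $S$, which requires your extra ``$\subseteq$'' check, whereas the paper tests redundancy of each $dn_i$ within the generating set $A\cup d\operatorname{msg}(\Delta)$.
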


\begin{proof}
Write \(\operatorname{msg}(\Delta)=\{n_1,\ldots,n_e\}\). Since \(S=\langle A\rangle+d\Delta\), we have \(S=\langle A,dn_1,\ldots,dn_e\rangle\). By Theorem~\ref{thm:Md-min-gen}, the elements of \(A\) are exactly the minimal generators of \(S\) that do not belong to \(d\Delta\). Thus all possible redundancies in the above generating set occur among the elements \(dn_1,\ldots,dn_e\).

Fix \(i\in\{1,\ldots,e\}\). If \(dn_i\in\langle A\rangle\), then \(dn_i\) is clearly redundant in the generating set \(A\cup d\operatorname{msg}(\Delta)\). Conversely, suppose that \(dn_i\) is redundant. Then \(dn_i=u+d\delta\), for some \(u\in\langle A\rangle\) and some \(\delta\in\langle n_1,\ldots,\widehat{n_i},\ldots,n_e\rangle\). Since the right-hand side is equal to \(dn_i\), the element \(u\) is divisible by \(d\). Moreover, \(u\in S\), and the elements of \(S\) divisible by \(d\) are precisely the elements of \(d\Delta\), because \(S/d=\Delta\). Hence \(u=d\eta\) for some \(\eta\in\Delta\). Therefore \(n_i=\eta+\delta\).

Since \(n_i\) is a minimal generator of \(\Delta\), the equality \(n_i=\eta+\delta\), with \(\eta,\delta\in\Delta\), forces one of \(\eta,\delta\) to be zero. If \(\eta=0\), then \(n_i=\delta\in\langle n_1,\ldots,\widehat{n_i},\ldots,n_e\rangle\), contradicting the minimality of \(n_i\). Hence \(\delta=0\), and then \(\eta=n_i\). Therefore \(dn_i=u\in\langle A\rangle\). Thus \(dn_i\) is redundant if and only if \(dn_i\in\langle A\rangle\).

It follows that the minimal system of generators of \(S\) is exactly \(A\cup \{dn:n\in\operatorname{msg}(\Delta),\ dn\notin\langle A\rangle\}\). Taking cardinalities gives the desired formula for \(\operatorname{e}(S)\).
\end{proof}

The inequalities
\[
\operatorname{rank}_{\mathcal{M}_d(\Delta)}(S)\le \operatorname{e}(S)\le
\operatorname{rank}_{\mathcal{M}_d(\Delta)}(S)+\operatorname{e}(\Delta)
\]
follow immediately from Proposition~\ref{prop:4.9}. The following example shows that both bounds are sharp.

\begin{example}
Let \(\Delta=\langle 3,4,5\rangle\) and \(d=2\). If
\(S=\langle 7,9\rangle+2\Delta\), then
\(S=\langle 6,7,8,9,10\rangle\). Since the even elements of \(S\) are precisely
\(2\Delta\), we obtain \(S/2=\Delta\). Moreover, we have
\(\operatorname{rank}_{\mathcal M_2(\Delta)}(S)=2\), and
\(\operatorname{e}(S)=5=\operatorname{rank}_{\mathcal M_2(\Delta)}(S)+\operatorname{e}(\Delta)\). Thus the upper
bound is attained.

On the other hand, if \(T=\langle 3,5\rangle+2\Delta\), then \(T=\langle 3,5\rangle\),
since \(6=2\cdot 3\), \(8=3+5\), and \(10=2\cdot 5\). Since \(T/2=\Delta\), we have
\(\operatorname{rank}_{\mathcal M_2(\Delta)}(T)=2\), and
\(\operatorname{e}(T)=2=\operatorname{rank}_{\mathcal M_2(\Delta)}(T)\). Thus the lower bound is
attained.
\end{example}

Nevertheless, these inequalities are only a rough consequence of Proposition~\ref{prop:4.9}. For instance, in rank one the formula yields only the two possibilities \(\operatorname{e}(S)=\operatorname{e}(\Delta)\) or \(\operatorname{e}(S)=\operatorname{e}(\Delta)+1\), as will be seen below.

\subsection{Rank-one elements}

We now focus on the first nontrivial case, namely $\mathcal{M}_d(\Delta)$-rank one. For an $\mathcal{M}_d(\Delta)$-monoid $M\neq d\Delta$, we set $\mu(M)=\min(M\setminus d\Delta)$.

\begin{proposition}\label{prop:mu-rank}
Let $M$ be an $\mathcal{M}_d(\Delta)$-monoid and let $A=\operatorname{msg}_{\mathcal{M}_d(\Delta)}(M)$ be its unique minimal $\mathcal{M}_d(\Delta)$-system of generators (Theorem~\ref{thm:Md-min-gen}).
If $M\neq d\Delta$, set $\mu(M)=\min(M\setminus d\Delta)$. Then:
\begin{enumerate}
\item if $M\neq d\Delta$, then $\mu(M)=\min(A)$ (in particular, $\mu(M)\in A$);
\item $\operatorname{rank}_{\mathcal{M}_d(\Delta)}(M)=0$ if and only if $M=d\Delta$;
\item $\operatorname{rank}_{\mathcal{M}_d(\Delta)}(M)=1$ if and only if $M\neq d\Delta$ and $M=\mathcal{M}_d(\Delta)[\{\mu(M)\}]$.
\end{enumerate}
\end{proposition}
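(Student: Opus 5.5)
The whole argument rests on Theorem~\ref{thm:Md-min-gen}, which identifies $A$ with $\operatorname{msg}(M)\setminus d\Delta$ and gives $M=\langle A\rangle+d\Delta$; each item then follows by short bookkeeping.

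\emph{Item (1).} Assume $M\neq d\Delta$ and set $\mu=\mu(M)$. Since $A\subseteq M\setminus d\Delta$, we get $\mu\le\min(A)$ once $A\neq\varnothing$. Nonemptiness holds because $A=\varnothing$ would give $M=d\Delta$. For the reverse inequality, use $\mu\in M=\langle A\rangle+d\Delta$ to write $\mu=u+d\delta$ with $u\in\langle A\rangle$ and $\delta\in\Delta$. Then $u\neq 0$, since otherwise $\mu\in d\Delta$. So $u$ is a sum of at least one element of $A$, and hence $\mu\ge u\ge\min(A)$. Alternatively, one can argue directly: $\mu$ is indecomposable in $M$. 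Indeed, if $\mu=x+y$ with $x,y\in M\setminus\{0\}$, both smaller than $\mu$, then both lie in $d\Delta$ by the minimality of $\mu$, so $\mu\in d\Delta$, a contradiction. Thus $\mu\in\operatorname{msg}(M)\setminus d\Delta=A$. Combining the two inequalities gives $\mu=\min(A)$.

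\emph{Item (2).} If the rank is $0$, then $A=\varnothing$, and $M=\langle\varnothing\rangle+d\Delta=d\Delta$. Conversely, if $M=d\Delta$, then $\operatorname{msg}(M)=d\operatorname{msg}(\Delta)\subseteq d\Delta$, so $A=\varnothing$.

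\emph{Item (3).} Suppose the rank is $1$. By item (2), $M\neq d\Delta$, and by item (1), $A=\{\mu(M)\}$. Then $M=\langle\mu(M)\rangle+d\Delta$. The set $\{\mu(M)\}$ is an $\mathcal{M}_d(\Delta)$-set because it is contained in $M$. It is only contained in an $\mathcal{M}_d(\Delta)$-monoid, which need not be a numerical semigroup, so we pass through the criterion $\langle X\rangle\cap d(\mathbb N\setminus\Delta)=\varnothing$ stated before Proposition~\ref{prop:Md-closure}. Here $\langle \mu(M)\rangle\subseteq M$ and $M/d=\Delta$ give exactly this condition. Proposition~\ref{prop:Md-closure} then yields $\mathcal{M}_d(\Delta)[\{\mu(M)\}]=\langle\mu(M)\rangle+d\Delta=M$.

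Conversely, suppose $M\neq d\Delta$ and $M=\mathcal{M}_d(\Delta)[\{\mu(M)\}]$. By Proposition~\ref{prop:Md-closure} again (the same set condition holds), $M=\langle\mu(M)\rangle+d\Delta$. So $\{\mu(M)\}$ is an $\mathcal{M}_d(\Delta)$-system of generators, and the uniqueness part of Theorem~\ref{thm:Md-min-gen} gives $A\subseteq\{\mu(M)\}$. Since $A\neq\varnothing$ by item (2), the rank is $1$.

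The only delicate point is this set-versus-monoid subtlety in invoking Proposition~\ref{prop:Md-closure}; the rest is formal.
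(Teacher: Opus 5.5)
Your proof is correct and follows the paper's argument item by item. Two small differences: your route to $\min(A)\le\mu$ in (1), writing $\mu=u+d\delta$ with $u\neq 0$, is slightly more direct than the paper's case analysis. Your explicit check that $\{\mu(M)\}$ is an $\mathcal{M}_d(\Delta)$-set, via $\langle\mu(M)\rangle\cap d(\mathbb{N}\setminus\Delta)=\varnothing$, before invoking Proposition~\ref{prop:Md-closure} fills in a step the paper leaves implicit.
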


\begin{proof}
By definition, $\operatorname{rank}_{\mathcal{M}_d(\Delta)}(M)=|A|$, and by Theorem~\ref{thm:Md-min-gen} we have $M=\langle A\rangle+d\Delta$.

(1) Assume $M\neq d\Delta$ and let $\mu=\mu(M)$. Since $A\subseteq M\setminus d\Delta$, we have
$\min(A)\ge \mu$. If $\mu\notin A$, then $\mu\in M=\langle A\rangle+d\Delta$, so we can write
$\mu=b+u$ with $b\in\langle A\rangle$ and $u\in d\Delta$. If $u>0$, then $b<\mu$ and
$b\in M$. Moreover, $b\notin d\Delta$, for otherwise $\mu=b+u\in d\Delta$, a contradiction.
Hence $b\in M\setminus d\Delta$ with $b<\mu$, contradicting the definition of $\mu$.
Therefore $u=0$, and so $\mu=b\in\langle A\rangle$. Since $\mu>0$, some element of $A$
appears in any nonzero expression of $\mu$ in $\langle A\rangle$, and therefore
$\min(A)\le\mu$. Thus $\mu(M)=\min(A)$.

(2) $\operatorname{rank}_{\mathcal{M}_d(\Delta)}(M)=0$ iff $A=\varnothing$, and then $M=\langle A\rangle+d\Delta=d\Delta$. The converse is immediate.

(3) If $\operatorname{rank}_{\mathcal{M}_d(\Delta)}(M)=1$, then $A=\{\mu(M)\}$ by (1), hence
$M=\langle\mu(M)\rangle+d\Delta=\mathcal{M}_d(\Delta)[\{\mu(M)\}]$ by Proposition~\ref{prop:Md-closure}.
Conversely, if $M\neq d\Delta$ and $M=\mathcal{M}_d(\Delta)[\{\mu(M)\}]$, then $M=\langle\mu(M)\rangle+d\Delta$ (Proposition~\ref{prop:Md-closure}),
so $M$ admits a $\mathcal{M}_d(\Delta)$-system of generators with one element and therefore $\operatorname{rank}_{\mathcal{M}_d(\Delta)}(M)\le 1$.
By (2) it cannot be $0$, hence it is $1$.
\end{proof}

\begin{theorem}\label{th:rankone-char}
The following are equivalent for a numerical semigroup $S$:
\begin{enumerate}
\item $S\in\mathcal{M}_d(\Delta)$ and $\operatorname{rank}_{\mathcal{M}_d(\Delta)}(S)=1$;
\item there exists $x\in\Delta\setminus d\Delta$ with $\gcd(x,d)=1$ such that $S=\langle x\rangle+d\Delta$.
\end{enumerate}
Moreover, in this case $x$ is uniquely determined by $S$ and equals $\mu(S)=\min(S\setminus d\Delta)$.
\end{theorem}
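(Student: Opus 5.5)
The plan is to prove the two implications separately, relying on Proposition~\ref{prop:mu-rank}, Theorem~\ref{thm:Md-min-gen} and Theorem~\ref{th:Md-fiber}, and to read off uniqueness from Proposition~\ref{prop:mu-rank}(1).

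For (1)$\Rightarrow$(2), let $S\in\mathcal{M}_d(\Delta)$ have rank one. Since $S\neq d\Delta$, Proposition~\ref{prop:mu-rank}(3) gives $S=\langle x\rangle+d\Delta$ with $x=\mu(S)$. Because $S\subseteq\Delta$ (it is an $\mathcal{M}_d(\Delta)$-monoid) and $x\notin d\Delta$ by definition of $\mu$, we get $x\in\Delta\setminus d\Delta$. For the gcd condition, Lemma~\ref{lem:2.10} gives $\gcd(\{x\}\cup d\Delta)=1$ because $S$ is a numerical semigroup. Every element of $d\Delta$ is divisible by $d$, so any common divisor of $x$ and $d$ divides every element of $\{x\}\cup d\Delta$. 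Hence $\gcd(x,d)=1$.

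For (2)$\Rightarrow$(1), I would check the hypotheses of Theorem~\ref{th:Md-fiber} with $X=\{x\}$. We have $X\subseteq\Delta$ and $X\cap d\Delta=\varnothing$ by assumption. Since $\gcd(\Delta)=1$, we get $\gcd(d\Delta)=d$, so $\gcd(\{x\}\cup d\Delta)=\gcd(x,d)=1$. The remaining condition, which I expect to be the main point, is $\langle x\rangle\cap d(\mathbb{N}\setminus\Delta)=\varnothing$. Since $\gcd(x,d)=1$, a multiple $kx$ is divisible by $d$ only if $d\mid k$. So $kx=d\cdot(k/d)x$, and $(k/d)x\in\Delta$ because $x\in\Delta$. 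Thus every element of $\langle x\rangle\cap d\mathbb{N}$ lies in $d\Delta$, and Theorem~\ref{th:Md-fiber} yields $S\in\mathcal{M}_d(\Delta)$. The set $\{x\}$ is an $\mathcal{M}_d(\Delta)$-system of generators, so the rank is at most $1$. It is not $0$, because $S\neq d\Delta$: indeed $x\in S\setminus d\Delta$ (alternatively, $d\Delta$ is not a numerical semigroup when $d\ge2$). Hence the rank equals $1$ by Proposition~\ref{prop:mu-rank}(2).

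For uniqueness, suppose $S=\langle x\rangle+d\Delta$ as in (2). Then $\{x\}$ is an $\mathcal{M}_d(\Delta)$-system of generators and the rank is $1$. By Theorem~\ref{thm:Md-min-gen}, the minimal system $A$ is contained in every $\mathcal{M}_d(\Delta)$-system of generators, so $A\subseteq\{x\}$, and $|A|=1$ forces $A=\{x\}$. Proposition~\ref{prop:mu-rank}(1) then gives $x=\min A=\mu(S)$. Thus $x$ is determined by $S$.
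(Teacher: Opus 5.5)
Your proposal is correct and follows essentially the same route as the paper: (1)$\Rightarrow$(2) uses Proposition~\ref{prop:mu-rank}(3) together with a gcd argument, and (2)$\Rightarrow$(1) rests on the fact that $\gcd(x,d)=1$ makes $d\mid kx$ force $d\mid k$. The differences are cosmetic: for the converse you check the hypotheses of Theorem~\ref{th:Md-fiber} instead of computing $S/d$ directly, and you get $\mu(S)=x$ from $A\subseteq\{x\}$ and Proposition~\ref{prop:mu-rank}(1), whereas the paper notes directly that every element of $S\setminus d\Delta$ has the form $kx+d\delta$ with $k\ge 1$.
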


\begin{proof}
Assume (1). By Proposition~\ref{prop:mu-rank}(3), if $x=\mu(S)=\min(S\setminus d\Delta)$ then
$S=\mathcal{M}_d(\Delta)[\{x\}]=\langle x\rangle+d\Delta$. In particular, $x\notin d\Delta$ and $x\in S/d=\Delta$.
If $\gcd(x,d)=g>1$, then $\langle x\rangle+d\Delta\subseteq g\mathbb{N}$, so $\mathbb{N}\setminus S$ is infinite, contradicting that $S$ is a numerical semigroup.
Hence $\gcd(x,d)=1$, proving (2). Uniqueness follows from $x=\mu(S)$.

Conversely, assume (2) and set $S=\langle x\rangle+d\Delta$. Since $\gcd(\{x\}\cup d\Delta)=\gcd(x,d)=1$, Lemma~\ref{lem:2.10} implies that $S$ is a numerical semigroup.
Moreover $S/d=\Delta$: clearly $\Delta\subseteq S/d$. If $y\in S/d$, write $dy=ax+d\delta$ with $a\in\mathbb{N}$ and $\delta\in\Delta$.
Reducing modulo $d$ gives $d\mid ax$, and since $\gcd(x,d)=1$ we get $d\mid a$, say $a=db$. Then $y=bx+\delta\in\Delta$.
Thus $S\in\mathcal{M}_d(\Delta)$.

Finally, since $x\in S\setminus d\Delta$, and every element of $S\setminus d\Delta$
has the form $kx+d\delta$ with $k\ge 1$, every element of $S\setminus d\Delta$ is
at least $x$. Hence $\mu(S)=x$, and
$\operatorname{rank}_{\mathcal{M}_d(\Delta)}(S)=1$ by Proposition~\ref{prop:mu-rank}(3).
\end{proof}

\subsection{Frobenius-type and related invariants in the rank-one case}\label{subsec:rankone-invariants}

From this point on, we work inside $\mathcal{M}_d(\Delta)$, and we compute Frobenius-type invariants for rank-one elements explicitly.

Recall that $S$ is a \emph{gluing} of numerical semigroups $S_1=\langle A\rangle$ and $S_2=\langle B\rangle$ if
$S=\langle d_1A\cup d_2B\rangle$ for some $d_1,d_2\ge2$ with $\gcd(d_1,d_2)=1$, $d_1\in S_2\setminus\operatorname{msg}(S_2)$ and $d_2\in S_1\setminus\operatorname{msg}(S_1)$; see \cite[Chapter~8, Section~1]{book}.

\begin{corollary}\label{cor:gluing-N-Delta}
Let $x\in \Delta$ with $\gcd(x,d)=1$. If, in addition, $x\notin \operatorname{msg}(\Delta)$, then $S=\langle x\rangle+d\Delta$ is a gluing of $\mathbb{N}$ and $\Delta$.
\end{corollary}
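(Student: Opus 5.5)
The plan is to check the definition of gluing directly, with $S_1=\mathbb{N}$ and $S_2=\Delta$ and the natural choice of multipliers. I would set $S_1=\mathbb{N}=\langle A\rangle$ with $A=\{1\}=\operatorname{msg}(\mathbb{N})$, and $S_2=\Delta=\langle B\rangle$ with $B=\operatorname{msg}(\Delta)=\{n_1,\dots,n_e\}$. The multipliers are $d_1=x$ and $d_2=d$.

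First I would verify the set equality. We have $\langle d_1A\cup d_2B\rangle=\langle x,dn_1,\dots,dn_e\rangle$. Since $d\Delta=\langle dn_1,\dots,dn_e\rangle$, this equals $\langle x\rangle+d\Delta=S$, exactly as in the proof of Lemma~\ref{lem:2.10}. Moreover $\gcd(x,d)=1$ by hypothesis, so Lemma~\ref{lem:2.10} (or Theorem~\ref{th:rankone-char}) shows that $S$ is a numerical semigroup. This is consistent with gluings producing numerical semigroups.

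Next I would check the numerical side conditions.
\begin{enumerate}
\item The condition $\gcd(d_1,d_2)=\gcd(x,d)=1$ is a hypothesis.
\item We need $d_1=x\ge 2$. Since $\Delta\neq\mathbb{N}$, we have $1\notin\Delta$, and $x\neq 0$ because $\gcd(0,d)=d\ge 2$. Hence $x\ge m(\Delta)\ge 2$.
\item We need $d_2=d\ge2$, which holds by the standing assumption.
\item We need $d_1\in S_2\setminus\operatorname{msg}(S_2)$, that is, $x\in\Delta\setminus\operatorname{msg}(\Delta)$. This is precisely the extra hypothesis.
\item We need $d_2\in S_1\setminus\operatorname{msg}(S_1)=\mathbb{N}\setminus\{1\}$. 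This holds since $d\ge 2$.
\end{enumerate}

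There is no real obstacle here. The only point needing care is the degenerate bookkeeping: ruling out $x\in\{0,1\}$, and noting that the minimal system of generators of $\mathbb{N}$ is $\{1\}$, so that the condition on $d_2$ reduces to $d\ne 1$. Once these identifications are made, the statement follows directly from the definition recalled from \cite[Chapter~8, Section~1]{book}.
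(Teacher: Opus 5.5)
Your proposal is correct and follows essentially the same route as the paper: take $S_1=\mathbb{N}=\langle 1\rangle$, $S_2=\Delta$, $d_1=x$, $d_2=d$, observe that $\langle \{x\}\cup d\,\operatorname{msg}(\Delta)\rangle=\langle x\rangle+d\Delta$, and check the side conditions, including $x\ge 2$ from $\Delta\neq\mathbb{N}$ and $\gcd(x,d)=1$. Your extra remark that $S$ is a numerical semigroup is harmless, but the gluing definition does not require it.
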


\begin{proof}
Let $S_1=\mathbb{N}=\langle 1\rangle$ and $S_2=\Delta$, and take $d_1=x$ and $d_2=d$.
Since $\Delta\ne\mathbb N$ and $x\in\Delta\setminus\operatorname{msg}(\Delta)$ with
$\gcd(x,d)=1$, we have $x\ge2$. Hence $d_1,d_2\ge2$. Moreover,
$\gcd(d_1,d_2)=1$, $d_2=d\in \mathbb{N}\setminus\operatorname{msg}(\mathbb{N})
=\mathbb{N}\setminus\{1\}$, and $d_1=x\in\Delta\setminus\operatorname{msg}(\Delta)$.
Finally, since $d\Delta=\langle d\,\operatorname{msg}(\Delta)\rangle$, we have
\[
S=\langle x\rangle+d\Delta
=\langle \{x\}\cup d\,\operatorname{msg}(\Delta)\rangle
=\langle d_1\operatorname{msg}(\mathbb{N})\cup d_2\operatorname{msg}(\Delta)\rangle,
\]
so $S$ is a gluing of $\mathbb{N}$ and $\Delta$.
\end{proof}

\begin{remark}\label{rem:gluing-quotient}
Corollary~\ref{cor:gluing-N-Delta} is a special case of the following observation.
Let $\Delta$ and $T$ be numerical semigroups, and let $S$ be a gluing of $\Delta$ and $T$ with coefficients $d,e$
(in particular, $\gcd(d,e)=1$ and $e\in\Delta$). Then $S/d=\Delta$.

Indeed, $\Delta\subseteq S/d$ since $d\Delta\subseteq S$. Conversely, if $y\in S/d$, then $dy\in S$ and, grouping the generators
in the gluing description, we may write $dy=d\delta+et$ for some $\delta\in\Delta$ and $t\in T$. Hence $d(y-\delta)=et$, and $\gcd(d,e)=1$ yields $d\mid t$, say $t=du$ with $u\in\mathbb{N}$. Therefore $y=\delta+eu\in\Delta$ because $e\in\Delta$ and $\Delta$ is a submonoid.
\end{remark}

The next result gives closed formulas for $\operatorname{F}(S)$ and $\operatorname{g}(S)$ for $S=\langle x\rangle+d\Delta$, obtained by analyzing residue classes modulo $d$.

\begin{theorem}\label{th:Fg-rankone}
Let $x\in \Delta$ with $\gcd(x,d)=1$, and set $S=\langle x\rangle+d\Delta$. Then
\[
\operatorname{F}(S)=d\operatorname{F}(\Delta)+(d-1)x,
\qquad
\operatorname{g}(S)=d\operatorname{g}(\Delta)+\frac{(d-1)(x-1)}{2}.
\]
\end{theorem}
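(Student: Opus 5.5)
We argue residue class by residue class modulo $d$, as in the proof of Theorem~\ref{th:3.3}. Every element of $S$ has the form $kx+d\delta$ with $k\in\mathbb N$ and $\delta\in\Delta$. Since $\gcd(x,d)=1$, the values $k=0,1,\dots,d-1$ give pairwise distinct residues $kx \bmod d$, and together they exhaust all residues.

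\textbf{Step 1: describe each residue class.} Fix $k\in\{0,\dots,d-1\}$. We claim that the elements of $S$ congruent to $kx$ modulo $d$ are exactly those of $kx+d\Delta$. One inclusion is clear. For the other, take $jx+d\delta\in S$ with $j\equiv k \pmod d$. Then $j=k+dt$ with $t\ge 0$, so
\[
jx+d\delta=kx+d(tx+\delta),
\]
and $tx+\delta\in\Delta$ because $x\in\Delta$. Equivalently, since $S\in\mathcal M_d(\Delta)$ by Theorem~\ref{th:rankone-char} (or directly from the computation in its proof), the class of $kx$ in $S$ is $kx+d\Delta$.

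\textbf{Step 2: Frobenius number.} In the class of $kx$, the integers not in $S$ are those $kx+dn$ with $n\in\mathbb Z\setminus\Delta$, together with all smaller integers in that class. The largest of them is $kx+d\operatorname F(\Delta)$. Maximizing over $k$ means taking $k=d-1$, which gives
\[
\operatorname F(S)=d\operatorname F(\Delta)+(d-1)x.
\]

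\textbf{Step 3: genus.} Fix $k$ and write $kx=dq_k+r_k$ with $0\le r_k<d$. The gaps of $S$ in residue class $r_k$ split into two kinds:
\begin{itemize}
\item the $q_k$ nonnegative integers $r_k,r_k+d,\dots,r_k+d(q_k-1)$, which lie below $kx$;
\item the $\operatorname g(\Delta)$ integers $kx+dn$ with $n\in\mathbb N\setminus\Delta$.
\end{itemize}
Summing over $k$ gives
\[
\operatorname g(S)=d\operatorname g(\Delta)+\sum_{k=0}^{d-1}\left\lfloor \frac{kx}{d}\right\rfloor.
\]
The remaining obstacle is to evaluate this floor sum. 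Since $\gcd(x,d)=1$, the residues $r_k$ run over $\{0,\dots,d-1\}$ as $k$ does. Hence
\[
\sum_{k=0}^{d-1} q_k=\frac{1}{d}\left(x\frac{d(d-1)}{2}-\frac{d(d-1)}{2}\right)=\frac{(d-1)(x-1)}{2},
\]
which is the classical identity $\sum_{k=0}^{d-1}\lfloor kx/d\rfloor=(d-1)(x-1)/2$ for coprime $x,d$. Substituting this into the expression for $\operatorname g(S)$ gives the stated formula.

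The degenerate case $x=1$ causes no trouble: the same argument applies, with every $q_k=0$.
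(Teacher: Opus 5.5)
Your proof is correct and is essentially the paper's argument: you index the residue classes by the multiplier $k\in\{0,\dots,d-1\}$ rather than by the residue $r$ (your $k$ is the paper's $a_r$ and your $q_k$ its $q_r$), show that each class is $kx+d\Delta$, and evaluate $\sum_k q_k$ by summing $kx=dq_k+r_k$ over a complete residue system. The only blemish is the parenthetical claim that $S\in\mathcal{M}_d(\Delta)$ ``equivalently'' yields the class description, which is not so since membership in the fiber only controls the class of $0$; your direct computation in Step~1 already suffices.
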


\begin{proof}
Since $\gcd(x,d)=1$, multiplication by $x$ permutes the residue classes modulo $d$. For each $r\in\{0,1,\dots,d-1\}$ let
$a_r\in\{0,1,\dots,d-1\}$ be the unique integer with $a_r x\equiv r\pmod d$.
We claim that the elements of $S$ in residue class $r$ are exactly $a_r x+d\Delta$.
Indeed, if $\delta\in\Delta$ then $a_r x+d\delta\in S$ and is congruent to $r$ modulo $d$.
Conversely, if $n\in S$ and $n\equiv r\pmod d$, write $n=ax+d\delta$ with $a\in\mathbb{N}$ and $\delta\in\Delta$.
Then $a\equiv a_r\pmod d$, so $a=a_r+dt$ with $t\in\mathbb{N}$ and hence
$n=a_r x+d(tx+\delta)\in a_r x+d\Delta$.

Since the maximal gap of $\Delta$ is $\operatorname{F}(\Delta)$, the largest gap of $S$ in the residue class $r$ modulo $d$ is $a_r x+d\operatorname{F}(\Delta)$.
Therefore
\[
\operatorname{F}(S)=\max_{0\le r\le d-1}\bigl(a_r x+d\operatorname{F}(\Delta)\bigr)=d\operatorname{F}(\Delta)+(d-1)x,
\]
because $\{a_r:0\le r\le d-1\}=\{0,1,\dots,d-1\}$.

For the genus, write $a_r x=r+d q_r$. Since $a_r x\equiv r\pmod d$ and $a_r x\ge0$, we have $q_r\in\mathbb{N}$. As shown above, the elements of $S$ in residue class $r$ are exactly $a_r x+d\Delta$, so an integer $r+dk$ lies in $S$ if and only if $k-q_r\in\Delta$. Hence this residue class contributes exactly $q_r+\operatorname{g}(\Delta)$ gaps, and summing gives
\[
\operatorname{g}(S)=\sum_{r=0}^{d-1}(q_r+\operatorname{g}(\Delta))=d\operatorname{g}(\Delta)+\sum_{r=0}^{d-1}q_r.
\]
Finally, since $a_r x=r+dq_r$,
\[
\sum_{r=0}^{d-1}q_r=\frac{1}{d}\Bigl(x\sum_{r=0}^{d-1}a_r-\sum_{r=0}^{d-1}r\Bigr)
=\frac{1}{d}\Bigl(x\cdot\frac{d(d-1)}{2}-\frac{d(d-1)}{2}\Bigr)=\frac{(d-1)(x-1)}{2},
\]
and the formula for $\operatorname{g}(S)$ follows.
\end{proof}

Recall that $\operatorname{PF}(S)=\{x\in \mathbb{Z}\setminus S:\ x+s\in S \text{ for all } s\in S\setminus\{0\}\}$ and $\operatorname{t}(S)=|\operatorname{PF}(S)|$.
We say that $S$ is \emph{symmetric} if $x\notin S$ implies $\operatorname{F}(S)-x\in S$ for all $x\in\mathbb{Z}$, equivalently $\operatorname{PF}(S)=\{\operatorname{F}(S)\}$; this is equivalent to the associated semigroup ring being Gorenstein \cite{kunz}.

\begin{theorem}\label{th:PF-rankone}
Let $x\in \Delta$ with $\gcd(x,d)=1$, and set $S=\langle x\rangle+d\Delta$. Then $\operatorname{t}(S)=\operatorname{t}(\Delta)$ and
\[
\operatorname{PF}(S)=\{df+(d-1)x : f\in \operatorname{PF}(\Delta)\}.
\]
In particular, $S$ is symmetric if and only if $\Delta$ is symmetric.
\end{theorem}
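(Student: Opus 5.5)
We work with the residue-class description from the proof of Theorem~\ref{th:Fg-rankone}: for each $r$ modulo $d$, the elements of $S$ congruent to $r$ are exactly $a_rx+d\Delta$ with $a_r\in\{0,\dots,d-1\}$, $a_rx\equiv r \pmod d$. Equivalently, every integer can be written uniquely as $ix+dk$ with $i\in\{0,\dots,d-1\}$ and $k\in\mathbb{Z}$, and
\[
ix+dk\in S \iff k\in\Delta .
\]
We use this as a dictionary throughout. It is the unique-representation form of the gluing structure, and it does not need $x\notin\operatorname{msg}(\Delta)$.

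\emph{Inclusion $\supseteq$.} Let $f\in\operatorname{PF}(\Delta)$ and set $z=(d-1)x+df$. Since $f\notin\Delta$, the dictionary gives $z\notin S$. Now let $s\in S\setminus\{0\}$, say $s=jx+d\delta$ with $j\in\{0,\dots,d-1\}$ and $\delta\in\Delta$.
\begin{itemize}
\item If $j=0$, then $\delta\neq0$, and $z+s=(d-1)x+d(f+\delta)$ lies in $S$ because $f+\delta\in\Delta$.
\item If $j\ge1$, then $z+s=(j-1)x+d(x+f+\delta)$. We need $x+f+\delta\in\Delta$. This holds when $x+\delta\neq0$, which is automatic since $x\ge1$ (indeed $x\neq0$, as $\gcd(x,d)=1$ and $d\ge2$).
\end{itemize}
So $z\in\operatorname{PF}(S)$.

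\emph{Inclusion $\subseteq$.} Let $z\in\operatorname{PF}(S)$ and write $z=ix+dk$ as above, with $k\notin\Delta$.

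First we show $i=d-1$. Suppose $i<d-1$. Then $x\in S\setminus\{0\}$ and $z+x=(i+1)x+dk$ is still in normal form with $k\notin\Delta$, so $z+x\notin S$, a contradiction.

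Next we show $k\in\operatorname{PF}(\Delta)$. For $\delta\in\Delta\setminus\{0\}$ we have $d\delta\in S\setminus\{0\}$, and $z+d\delta=(d-1)x+d(k+\delta)\in S$, so $k+\delta\in\Delta$. Also $k\notin\Delta$. The only remaining point is that $k$ could be negative. In that case $\operatorname{PF}(\Delta)$ fails the condition $f\ge 0$? No: $\operatorname{PF}$ is defined within $\mathbb{Z}$, and the condition $k+\delta\in\Delta$ for all nonzero $\delta$ (in particular $\delta=m(\Delta)$, and all large $\delta$) already places $k$ in $\operatorname{PF}(\Delta)$ by definition. If one prefers to exclude negative values, note that $k+m(\Delta)\in\Delta$ with $k<0$ would force $k+m(\Delta)=0$, i.e. $k=-m(\Delta)$. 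Then $k+\delta\in\Delta$ for every nonzero $\delta$ means $\delta-m(\Delta)\in\Delta$ for all such $\delta$, which fails for $\delta$ a minimal generator other than $m(\Delta)$ (one exists since $\Delta\ne\mathbb{N}$). Hence $k\ge0$.

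\emph{Conclusion.} The map $f\mapsto df+(d-1)x$ is injective, so $\operatorname{t}(S)=\operatorname{t}(\Delta)$. Symmetry is equivalent to $\operatorname{t}=1$, so $S$ is symmetric if and only if $\Delta$ is.

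The main obstacle is the inclusion $\subseteq$, specifically forcing $i=d-1$. This is handled by adding the generator $x$ and using uniqueness of the normal form. The rest is bookkeeping.
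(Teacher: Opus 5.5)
Your proof is correct and follows essentially the same route as the paper. The paper proves only the special case $(d-1)x+dk\in S\iff k\in\Delta$ of your normal-form dictionary. For the reverse inclusion it shows directly that $w+x$ must lie in $d\Delta$, which corresponds to your step forcing $i=d-1$. Your forward-inclusion computation $z+s=(j-1)x+d(x+f+\delta)$ is the paper's decomposition $(w+x)+(s-x)$ in different bookkeeping. Your detour about negative $k$ is unnecessary, since $\operatorname{PF}$ is defined inside $\mathbb{Z}$, and it is awkwardly phrased, though the argument you give there is correct.
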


\begin{proof}
Set $S=\langle x\rangle+d\Delta=\{ax+d\delta:\ a\in\mathbb{N},\ \delta\in\Delta\}$.
For every \(k\in\mathbb{Z}\), we first record the equivalence
\[
(d-1)x+dk\in S \iff k\in\Delta.
\]
If $k\in\Delta$, then $(d-1)x+dk\in\langle x\rangle+d\Delta=S$. Conversely, assume
\((d-1)x+dk\in S\). Then necessarily \((d-1)x+dk\in\mathbb{N}\), and $(d-1)x+dk=ax+d\delta$ for some $a\in\mathbb{N}$ and
$\delta\in\Delta$. Reducing modulo $d$ yields $ax\equiv -x\pmod d$, hence
$a\equiv d-1\pmod d$ since $\gcd(x,d)=1$. Writing $a=d-1+dt$ with
$t\in\mathbb{N}$ and dividing by $d$ gives $k=tx+\delta\in\Delta$.

Fix $f\in\operatorname{PF}(\Delta)$ and set $w=df+(d-1)x$. Since $f\notin\Delta$,
the equivalence implies $w\notin S$. Moreover, since $x\in\Delta\setminus\{0\}$,
the defining property of pseudo-Frobenius numbers gives $f+x\in\Delta$, and therefore
$w+x=d(f+x)\in d\Delta\subseteq S$.

Let $s\in S\setminus\{0\}$. If $s\in d\Delta$, say $s=d\delta$ with
$\delta\in\Delta\setminus\{0\}$, then $f+\delta\in\Delta$ and
$w+s=(d-1)x+d(f+\delta)\in S$. If $s\notin d\Delta$, write
$s=qx+d\delta$ with $q\ge 1$ and $\delta\in\Delta$. Then
$s-x=(q-1)x+d\delta\in S$, so $w+s=(w+x)+(s-x)\in S$. Hence
$w\in\operatorname{PF}(S)$, proving
\[
\{df+(d-1)x:\ f\in\operatorname{PF}(\Delta)\}\subseteq \operatorname{PF}(S).
\]

Conversely, let $w\in\operatorname{PF}(S)$. Since $x\in S\setminus\{0\}$, we have
$w+x\in S$, so $w+x=ax+d\delta$ for some $a\in\mathbb{N}$ and $\delta\in\Delta$.
If $a\ge 1$, then $w=(a-1)x+d\delta\in S$, a contradiction. Hence $a=0$ and
$w+x=d\delta$ for some $\delta\in\Delta$. Write $\delta=x+f$ with
$f\in\mathbb{Z}$, so $w=(d-1)x+df$.

If $f\in\Delta$, then $w\in S$ by the equivalence, again a contradiction; thus
$f\notin\Delta$. Finally, for every $\delta'\in\Delta\setminus\{0\}$ we have
$w+d\delta'\in S$, and
$w+d\delta'=(d-1)x+d(f+\delta')$. Using the equivalence, this forces
$f+\delta'\in\Delta$ for all $\delta'\in\Delta\setminus\{0\}$, hence
$f\in\operatorname{PF}(\Delta)$.

Therefore
\[
\operatorname{PF}(S)=\{df+(d-1)x:\ f\in \operatorname{PF}(\Delta)\}.
\]
The map $f\mapsto df+(d-1)x$ is a bijection, so $\operatorname{t}(S)=\operatorname{t}(\Delta)$.

Finally, a numerical semigroup is symmetric if and only if its type equals $1$,
equivalently $\operatorname{PF}(S)=\{\operatorname{F}(S)\}$. Hence $S$ is symmetric if and only if $\Delta$ is symmetric. 
\end{proof}

\begin{example}
In the setting of Example~\ref{ex:Sda-rank}, take $d=2$. Then $
\Delta_2(a)=2\Delta\ \cup\ \bigl((2a+1)+2\Delta\bigr)=\langle 2a+1\rangle+2\Delta$, so $\Delta_2(a)\in\mathcal{M}_2(\Delta)$ and it has $\mathcal{M}_2(\Delta)$-rank one (Example~\ref{ex:Sda-rank}).
Moreover, the formulas from Section~\ref{Sect3} and the rank-one formulas from Section~\ref{Sect4} agree:
by Theorem~\ref{th:3.3} (with $d=2$), $
\operatorname{F}(\Delta_2(a))=2\operatorname{F}(\Delta)+2a+1$ and $\operatorname{g}(\Delta_2(a))=2\operatorname{g}(\Delta)+a$,
which coincide with Theorem~\ref{th:Fg-rankone} upon setting $x=2a+1$. Likewise, Theorem~\ref{th:PF-rankone} yields
$\operatorname{PF}(\Delta_2(a))=\{\,2f+(2a+1): f\in \operatorname{PF}(\Delta)\,\}$ and $\operatorname{t}(\Delta_2(a))=\operatorname{t}(\Delta)$.
\end{example}

\subsection*{Data availability}\mbox{}\par
\noindent Data sharing is not applicable to this article, as no datasets were generated or analyzed during the present work.

\subsection*{Declarations}\mbox{}\par 
\noindent The first author was partially supported by project PID2022-138906NB-C21 (MCIN /AEI/10.13039/501100011033, NextGenerationEU/PRTR) and grant GR24068 (Junta de Extre\-madura, ERDF). The second author received no specific funding for this work.

\medskip
\noindent The authors have no relevant financial or non-financial interests to disclose.

\medskip
\noindent
All authors have contributed equally in the development of this work.


\end{document}